\def\ps@pprintTitle{%
  \let\@oddhead\@empty
  \let\@evenhead\@empty
  \let\@oddfoot\@empty
  \let\@evenfoot\@empty
}
\newtheorem{theorem}{Theorem}[section]
\newtheorem{corollary}[theorem]{Corollary}
\newtheorem{lemma}[theorem]{Lemma}
\newtheorem{proposition}[theorem]{Proposition}
\theoremstyle{definition}
\newtheorem{remark}[theorem]{Remark}
\theoremstyle{definition}
\theoremstyle{definition}
\def\dashint{\operatorname
{\,\,\text{\bf--}\kern-.98em\DOTSI\intop\ilimits@\!\!}}
\def\.5{\frac{1}{2}}
\def\bR{\mathbb{R}}
\def\cM{\mathcal{M}}
\def\cL{\mathcal{L}}
\journal{J. Math. Pures Appl.}
\begin{document}

\begin{frontmatter}

\title{Fabes-Stroock approach to higher integrability of Green's functions and ABP estimates with $L_d$ drift}

\author[PJ]{Pilgyu Jung}
\ead{pilgyujung@skku.edu}
\author[KW]{Kwan Woo}
\ead{kwan.woo@unibas.ch}

\address[PJ]{Department of Mathematics, Sungkyunkwan University, 2066 Seobu-ro, Jangan-gu, Suwon-si, Gyeonggi-do, 16419, Republic of Korea}
\address[KW]{Departement Mathematik und Informatik, Universit\"at Basel, CH-4051 Basel, Switzerland}

\begin{abstract}
\selectlanguage{english}
We explore the higher integrability of Green's functions associated with the second-order elliptic equation $a^{ij}D_{ij}u + b^i D_iu = f$ in a bounded domain $\Omega \subset \mathbb{R}^d$, and establish an enhanced version of Aleksandrov's maximum principle.
In particular, we consider the drift term $b=(b^1, \ldots, b^d)$ in $L_d$ and the source term $f \in L_p$ for some $p < d$.
This provides an alternative and analytic proof of a result by N. V. Krylov (\textit{Ann. Probab.}, 2021) concerning $L_d$ drifts.
The key step involves deriving a Gehring-type inequality for Green's functions by using the Fabes-Stroock approach (\textit{Duke Math. J.}, 1984).
\vskip 0.5\baselineskip

\selectlanguage{francais}
\noindent{\bf R\'esum\'e} \vskip 0.5\baselineskip \noindent
Nous étudions la sur-intégrabilité des fonctions de Green associées à l'équation elliptique du second ordre $a^{ij}D_{ij}u + b^i D_iu = f$ dans un domaine borné $\Omega \subset \mathbb{R}^d$, et établissons une version améliorée du principe du maximum d’Aleksandrov.
En particulier, nous considérons un terme de dérive $b=(b^1, \ldots, b^d)$ appartenant à $L_d$ et un second membre $f$ dans $L_p$ pour un certain $p < d$.
Nous proposons ainsi une démonstration analytique alternative d’un résultat de N.~V.~Krylov (\textit{Ann. Probab.}, 2021) concernant les dérives dans $L_d$.
L’étape clé consiste à établir une inégalité de type Gehring pour les fonctions de Green en utilisant l’approche de Fabes et Stroock (\textit{Duke Math. J.}, 1984).\end{abstract}
\selectlanguage{english}

\begin{keyword}
Aleksandrov's maximum principle, Gehring's lemma, critical drift, Green's function
\MSC[2020] 35B50, 35J08, 35R05, 35A08
\end{keyword}

\end{frontmatter}
\selectlanguage{english}

\section{Introduction}
Let $d \ge 2$ be a positive integer and $\bR^d$ be a $d$-dimensional Euclidean space with points $x = (x_1, \ldots, x_d)$.
Let $\delta \in (0, 1]$ and $a=[a^{ij}(x)]$ be a measurable $d \times d$ symmetric matrix-valued function satisfying an ellipticity condition: 
\begin{equation}
	\label{eq_elliptic}
\delta |\xi|^2\le a^{ij}(x) \xi_i\xi_j \le \delta^{-1}|\xi|^2,  \quad \forall \xi, \, x \in \bR^d.
\end{equation}
For a domain $\Omega \subset \bR^d$, let $b=(b^1, \ldots, b^d)$ be an $\bR^d$-valued  measurable function satisfying 
\[
\|b\|_{L_d(\Omega)} =\left(\int_{\Omega} |b(x)|^d\,dx\right)^{1/d} \le \|b\|
\]
for some non-negative fixed constant $\|b\|$.
We define
\[
\cL u =\cL_{a,b} u := a^{ij}D_{ij}u + b^iD_i u
\]
and let $\cL^* v = D_{ij}(a^{ij}v) - D_i(b^i v)$ be the formal adjoint of $\cL$.

This article addresses the \textit{higher integrability} of Green's functions for $\cL$ together with the following \textit{maximum principle}: 
\begin{equation}
    \label{classical abp}
\sup_{\Omega} u_+ \le \sup_{\partial\Omega} u_+ + N\|(\mathcal{L} u)_-\|_{L_p(\Omega)}.
\end{equation}
Here, $N$ depends only on $d$, $p$, $\delta$, $\operatorname{diam}(\Omega)$, and $\|b\|$.\\

The classical Aleksandrov-Bakel'man-Pucci (ABP) estimate, which corresponds to the case $p=d$ in \eqref{classical abp}, is a foundational result in the regularity theory of non-divergence form elliptic and parabolic equations.
This estimate, due to the work of Aleksandrov \cite{MR0199540}, Bakel'man \cite{MR0126604}, and Pucci \cite{MR0214905}, has been instrumental in various fields.
For instance, based on the ABP estimate, one can proceed with the Krylov-Safonov theorem, which is about the Harnack inequality and H\"older continuity of the solution to linear equation $a^{ij}D_{ij} u=f$ \cite{MR563790, MR2667641}.
For nonlinear equations, the regularity theory of viscosity solutions based on ABP estimates is well introduced in \cite{caffarelli1995fully}.
Furthermore, in the context of stochastic analysis, the ABP-type estimate also plays a crucial role in the proof of weak uniqueness of stochastic differential equations, see \cite{MR4317707}, for instance.

When there is no drift term, \textit{i.e.}, $b=0$, Fabes and Stroock \cite{MR771392} extended the estimate \eqref{classical abp} to the case $p>d-\varepsilon$ for some $\varepsilon>0$.
More precisely, by analyzing the local integrability of Green's functions and utilizing a Gehring-type Lemma, they raised the integrability beyond $d/(d-1)$.
This result is significant in $L_p$-theory for fully nonlinear equations, in particular, Escauriaza \cite{MR1237053} (see Winter \cite{MR2486925} for boundary estimates) used the Fabes-Stroock's result to extend Caffarelli's $W_{p}^2$ ($p > d$) estimates \cite{MR1005611} for solutions to a class of fully nonlinear equations to the case $p > d - \varepsilon$. 

On the other hand, for a general drift term $b\in L_d(\Omega)$, the corresponding extension of the Fabes-Stroock result remained an open problem until it was resolved by Krylov \cite{MR4252191}. 
Building on results in \cite{MR4252191}, the $L_p$-theory ($p > d - \varepsilon$) for fully nonlinear elliptic equations with $L_d$ drifts was further developed in \cite{Krylov2020}, leading to \eqref{classical abp} for $L_p$-viscosity solution in \cite{MR4515258}.
In particular, the author in \cite{MR4252191} obtained (among other various results on the stochastic process) the ABP estimate with $b\in L_d(\Omega)$ when $\mathcal{L} u \in L_p$ for some $p<d$ through a probabilistic approach such as sophisticated stopping-time arguments.
However, such probabilistic techniques may be challenging for readers unfamiliar with probability theory.
To make this significant result more accessible, we provide self-contained analytic proof in line with Fabes-Stroock \cite{MR771392}.

\subsection{\textbf{Fabes-Stroock approach}}
We briefly introduce the approach used in \cite{MR771392} to establish the higher integrability of Green's function.
The authors investigated the behavior of nonnegative solutions to adjoint equations $\cL^*_0 v = 0$ in $\Omega$ (see \eqref{eq240815_01} for the precise definition of adjoint problems), where $\cL_0 = a^{ij}D_{ij}$.
Their approach begins with the inequality
\begin{equation}
	\label{eq240811_01}
\left( \dashint_B v(y)^{d/(d-1)}\,dy \right)^{(d-1)/d} \le N(d, \delta) \dashint_B v(y)\,dy,
\end{equation}
which holds for any balls $B$ such that its concentric double $2B$ lies within $\Omega$.
This result is obtained by solving the equation $\cL_0 u= f$ for an arbitrary  $f \in L_d(\Omega)$, and then estimating
\[
\int_{B} v(y) f(y) \,dy = \int_B  v(y)  \cL_0 u(y)  \,dy \le N\int_{B} v(y)\,dy
\]
using the fact that $\cL_0^* v = 0$ and regularity properties of $u$.

On the other hand, classical theory provides lower bounds for the mean of Green's function $G$ associated with $\cL_0$, as given by 
\begin{equation}
	\label{eq240811_02}
\int_{B} {G}_{2B}(x, y) \,dy \ge \nu
\end{equation}
for some $\nu = \nu(d, \delta) > 0$.
Since ${G}_{\Omega} - {G}_{2B}$ ($2B \subset \Omega$) is a nonnegative solution to $\cL_0^*v = 0$, combining \eqref{eq240811_01} and \eqref{eq240811_02} yields the reverse H\"older inequality for ${G}_{\Omega}$:
\begin{equation}
	\label{eq240811_03}
\left( \dashint_{B} {G}_{\Omega}(x, y)^{d/(d-1)}\,dy \right)^{(d-1)/d} \le N(d, \delta)  \dashint_{B} {G}_{\Omega}(x, y)\,dy 
\end{equation}
for any $x \in \Omega$ and any ball $B$ such that its concentric quadruple $4B$ is contained in $\Omega$.
Finally, by \eqref{eq240811_03} and the $A_{\infty}$-weight theory \cite{10.1007/BF02392268, Coifman1974}, one can deduce that
\[
\sup_{x\in \Omega} \int_{\Omega}{G}_{\Omega}(x, y)^q \,dy \le N(d, \delta, \operatorname{diam}(\Omega))
\]
for some $q > (d-1)/d$.

For the case $b \neq 0$, Fok \cite{MR1632776} adapted the Fabes-Stroock approach to obtain an inequality similar to \eqref{eq240811_01} for nonlinear problems.
However, in this case, even for the linear operators $\cL u = a^{ij}D_{ij}u + b^iD_iu $, a slightly higher integrability of drift term is required (for example, $b \in L_{d+ \varepsilon}$) due to the low integrability of $Du$.
We also refer to~\cite{Cab1995}, which presents a refined ABP estimate for bounded drift $b$, where the domain dependence is captured via geometric quantities finer than just the diameter.\\

In this paper, to handle the critical drift $b \in L_d(\Omega)$, we employ Aleksandrov solutions $z \in W_{\infty}^1$ (see Lemma \ref{test estimate}) that satisfy $f \le \cL z$ instead of solving the linear equation $\cL u = f$.
Unlike $u$ of $\cL u = f$, it is difficult to expect regularity of $D^2z$ in our situation.
However, the analysis needed to obtain \eqref{eq240811_01} does not rely on the regularity of $D^2z$.
Instead, by leveraging the properties of the convex solution $z$, we obtain the boundedness of $|Dz|$ (generally $|Du|$ is not bounded), which can be used more effectively in the proof. See Proposition \ref{prop0931}.
We also prove a counterpart of \eqref{eq240811_02} that takes into account $b \in L_d(\Omega)$; see Lemma \ref{lem2} and Corollary \ref{cor4}.
Another fundamental idea we employ to obtain the aforementioned key estimate is the fact that the $L_d$-norm of $b$ is invariant under scaling $b(x) \mapsto rb(rx)$.
This property renders the effect of the drift $b$ negligible compared to the diffusion $[a^{ij}]$ as the size of the ball increases.
The advantage of this approach is that the required ball size can be determined solely by $d, \delta$ and $\|b\|$ using the arithmetic-geometric mean, making the \textit{a priori} estimate \eqref{classical abp} more practical.\\

This paper is organized as follows. 
In Section \ref{abp notation}, we introduce some notation and main results.
We state auxiliary estimates in section \ref{abp auxiliary}.
The main theorems are proved in section \ref{proof abp}.

\section*{Notation} Throughout this paper, we use the following notation.\\

\noindent We denote by $|x|$ the Euclidean norm of a vector $x \in \mathbb{R}^d$, and by $|A|$ the Lebesgue measure of a measurable set $A \subset \mathbb{R}^d$.
The same symbol $|\cdot|$ is used for both the norm and the measure, and the intended meaning will be clear from the context.
For any measurable set $A \subset \mathbb{R}^d$ with positive measure, we define the average of a function $f$ over $A$ by
$$
\dashint_A f := \frac{1}{|A|} \int_A f.
$$
For real numbers $a, b$, and $c$, we define
$$
a \vee b := \max\{a, b\}, \quad a \wedge b := \min\{a, b\}, \quad c_{+} := \max\{c, 0\}, \quad c_{-} := \max\{-c, 0\}.
$$
We adopt the summation convention over repeated indices and denote partial derivatives by $D_i = \partial / \partial x_i$ and $D_{ij} = D_i D_j$.
If a constant $N$ depends on parameters $d, \nu, \ldots$, we indicate this dependence by writing $N = N(d, \nu, \ldots)$.
\\

\noindent  Given a domain $\Omega \subset \mathbb{R}^d$, we write $\partial\Omega$ for its boundary and $\operatorname{diam}(\Omega)$ for its diameter, defined by
$$
\operatorname{diam}(\Omega) := \sup_{x, y \in \Omega} |x-y|.
$$
For $R > 0$, we denote the open ball of radius $R$ centered at $x$ by
$$
B_R(x) := \{ y \in \mathbb{R}^d : |y-x| < R \},
$$
and simply write $B_R := B_R(0)$ when centered at the origin.
Similarly, the open cube of side length $R$ centered at $x$ is denoted by
$$
Q_R(x) := \{ y \in \mathbb{R}^d : |x_i - y_i| < R/2 \quad \text{for all } i = 1, \ldots, d \},
$$
with $Q_R := Q_R(0)$ when centered at the origin.\\

\noindent We now introduce the function spaces used throughout the paper.
Let $\Omega$ be a domain in $\mathbb{R}^d$.
We denote by $C_c^{\infty}(\Omega)$ the space of infinitely differentiable functions with compact support in $\Omega$, and by $C(\overline{\Omega})$ the space of continuous functions on the closure $\overline{\Omega}$.
The space $C^\infty(\overline{\Omega})$ denotes the set of functions that are infinitely differentiable up to the boundary $\partial \Omega$.\\

\noindent  For $p \in [1, \infty)$, we denote by $L_p(\Omega)$ the usual Lebesgue space of $p$-integrable functions on $\Omega$, equipped with the norm $\| \cdot \|_{L_p(\Omega)}$.
An element $u \in L_{p, \mathrm{loc}}(\Omega)$ means that $u \in L_p(K)$ for every compact set $K \subset \Omega$.
We denote by $W_p^2(\Omega)$ the Sobolev space consisting of functions $u \in L_p(\Omega)$ whose weak derivatives $D_i u$ and $D_{ij} u$ also belong to $L_p(\Omega)$.
The norm on $W_p^2(\Omega)$ is given by
$$
\| u \|_{W_p^2(\Omega)} := \| u \|_{L_p(\Omega)} + \| Du \|_{L_p(\Omega)} + \| D^2 u \|_{L_p(\Omega)}.
$$
We similarly define $W_{p, \mathrm{loc}}^2(\Omega)$ as the space of functions that locally belong to $W_p^2(\Omega)$.

\section{Preliminaries and main results}
\label{abp notation}
We call $G_\Omega$ the \textit{Green's function of $\cL$ associated with $\Omega$} if 
\[
u(x)=\int_{\Omega}G_\Omega (x,y)f(y)\,dy   
\]
belongs to $W_{d, \mathrm{loc}}^2(\Omega)\cap C(\overline{\Omega})$ for  $f\in L_d(\Omega)$ and satisfies the equation 
\begin{equation}
    \label{defi_green}
    \begin{cases}
        \cL u\, (= a^{ij}D_{ij}u + b^iD_iu) = - f  \quad \text{in} \quad \Omega,\\
        u=0 \quad \text{on} \quad \partial\Omega.        
    \end{cases}
\end{equation}
Unless otherwise specified, solutions $u$ to equation \eqref{defi_green} are understood to lie in suitable function spaces where the equation holds almost everywhere in $\Omega$.
We emphasize that all the estimates derived in this paper are \textit{independent of the regularity} of the coefficients $a^{ij}$, $b^i$, and the boundary $\partial\Omega$.
This fact, together with an approximation argument, allows us to assume the existence of Green's functions in the proof of a main result, Theorem \ref{cor_main}, which presents a refined version of ABP estimate.\\

For completeness, in the following remark, we also include an argument for the existence and uniqueness of the Green's function, based on $L_p$-theory.
While several approaches are available in the literature, we adopt this method for its relevance to the present framework.

\begin{remark}[Unique existence of the Green's function]
	\label{rmk_Green}
If we assume that $a^{ij}$ are uniformly continuous and $b^i$ are bounded, then the Green's function associated with $\Omega$ exists and is unique, provided that $\Omega$ is bounded and $  \partial\Omega\in C^2$.
Indeed, by the classical $L_p$-theory (see, for instance, \cite[Chapter 9]{MR0737190} or \cite[Theorem 11.3.2]{MR2435520}), if $f \in L_d(\Omega)$, there exists a unique solution $\mathcal{R}f$ to equation \eqref{defi_green} such that
\[
\|\mathcal{R}f\|_{W_d^2(\Omega)} \leq N(d, p, a^{ij}, b^i, \Omega) \|f\|_{L_d(\Omega)},
\]
where $a^{ij}$, $b^i$, and $\Omega$ satisfy the aforementioned conditions.
Then, using Sobolev embedding, we have that $\mathcal{R}f \in C(\overline{\Omega})$ and
\[
\|\mathcal{R}f\|_{C(\overline{\Omega})} \leq N_0 \|f\|_{L_d(\Omega)},
\]
where $N_0 = N_0(d, p, a^{ij}, b^i, \Omega)$.
This implies that for each $x \in \Omega$, the resolvent mapping $f \in L_d(\Omega) \mapsto \mathcal{R}f(x)$ defines a bounded linear functional whose norm is bounded by $N_0$.
By Riesz's representation theorem, for  $x\in \Omega$, there exists a unique function $G_\Omega(x, \cdot) \in L_{d/(d-1)}(\Omega)$ such that
\[
\mathcal{R}f(x) = \int_\Omega G_\Omega(x, y) f(y) \, dy
\]
Therefore, the existence and uniqueness of $G_\Omega$ are guaranteed.

On the other hand, even when the coefficients and the domain boundary are irregular, one can still define  Green's functions using probability theory. 
For more details on probabilistic solutions, see \cite{MR1483890}.
\end{remark}

\begin{theorem}
	\label{cor_main}
Let $\Omega$ be a bounded domain. There is a constant $d_0 = d_0(d, \delta, \|b\|) \in (d/2, d)$ such that the following holds:
If $u \in W_{p, \mathrm{loc}}^2(\Omega) \cap C(\overline{\Omega})$ with $p \in [d_0, \infty)$, then
\begin{equation}
	\label{eq1013}
\sup_{\Omega} u_{+} \le \sup_{\partial\Omega} u_+ + N  \operatorname{diam}(\Omega)^{2-d/p} \| \left(  \cL u \right)_{-} \|_{L_{p}(\Omega)},
\end{equation}
where $N$ depends only on $d$, $\delta$, and $\|b\|$.
\end{theorem}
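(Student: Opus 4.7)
The plan is to deduce \eqref{eq1013} from Theorem \ref{thm_main} by duality: represent $u$ via the Green's function of $-\cL$ and apply H\"older's inequality. Let $q>d/(d-1)$ be the exponent from Theorem \ref{thm_main}. Decreasing $q$ slightly if necessary (legitimate since \eqref{eq_main} passes to smaller exponents on the bounded set $\Omega$ after absorbing a $\operatorname{diam}(\Omega)$-dependent factor), I may arrange $q<d/(d-2)$ when $d\ge 3$; then $d_0:=q/(q-1)\in(d/2,d)$ serves as the threshold in the statement.

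For the core argument I first assume $a^{ij}$, $b^i$ are smooth, $\partial\Omega$ is smooth, and $u\in W^2_p(\Omega)\cap C(\overline\Omega)$, so the classical nonnegative Green's function $G_\Omega$ of $-\cL$ exists. Set $M:=\sup_{\partial\Omega}u_+$, $f:=(\cL u)_-\in L_p(\Omega)$, and
\[
\Phi(x):=\int_\Omega G_\Omega(x,y)\,f(y)\,dy,
\]
so that $-\cL\Phi=f$ in $\Omega$ and $\Phi|_{\partial\Omega}=0$. Then $w:=u-M-\Phi$ satisfies $-\cL w\le 0$ in $\Omega$ with $w\le 0$ on $\partial\Omega$, and the classical maximum principle (valid for smooth bounded coefficients) gives $u\le M+\Phi$ in $\Omega$. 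H\"older's inequality in $y$ with conjugate exponents $(p,p')$ then yields
\[
\Phi(x)\le \|G_\Omega(x,\cdot)\|_{L_{p'}(\Omega)}\,\|f\|_{L_p(\Omega)}.
\]
When $p=d_0$ (so $p'=q$), Theorem \ref{thm_main} directly gives $\Phi(x)\le N\operatorname{diam}(\Omega)^{2-d/d_0}\|f\|_{L_{d_0}(\Omega)}$. For $p>d_0$, so $p'<q$, I interpolate via H\"older on the bounded set $\Omega$: using $|\Omega|\le N(d)\operatorname{diam}(\Omega)^d$ and the identity $\tfrac{1}{p'}-\tfrac{1}{q}=\tfrac{1}{d_0}-\tfrac{1}{p}$,
\[
\|G_\Omega(x,\cdot)\|_{L_{p'}(\Omega)}\le |\Omega|^{1/p'-1/q}\|G_\Omega(x,\cdot)\|_{L_q(\Omega)}\le N\operatorname{diam}(\Omega)^{2-d/p},
\]
and \eqref{eq1013} follows.

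The smoothness hypotheses are removed by a standard approximation argument, which I view as the main technical obstacle. I would approximate $(a^{ij},b^i)$ by smooth bounded coefficients $(a^{ij}_k,b^i_k)$ preserving the ellipticity constant $\delta$ and the $L_d$-bound $\|b\|$, and exhaust $\Omega$ by smooth subdomains $\Omega_k\subset\subset\Omega$ on which $u\in W^2_p(\Omega_k)$. Applying the smooth case on $\Omega_k$ with operator $\cL_k$ requires splitting $\cL_k u=\cL u+(\cL_k-\cL)u$ and showing the error term is controlled in $L_p(\Omega_k)$ so that $(\cL_k u)_-\to(\cL u)_-$ in $L_p(\Omega_k)$; uniformity of constants in $k$ rests on the fact that the constant in Theorem \ref{thm_main} depends only on $d$, $\delta$, and $\|b\|$. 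Passing $k\to\infty$ and using continuity of $u$ on $\overline\Omega$ (so that $\sup_{\partial\Omega_k}u_+\to\sup_{\partial\Omega}u_+$) finishes the proof.
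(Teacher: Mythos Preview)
Your approach is essentially the paper's: represent the inhomogeneous part of $u$ through the Green's function, bound it via H\"older's inequality and Theorem \ref{thm_main}, and control the remainder by a maximum principle. The paper solves $\cL w=\cL u$, $w|_{\partial\Omega}=0$ and applies Theorem \ref{thm_abp} to $v=u-w$; your $\Phi$ with $-\cL\Phi=(\cL u)_-$ is a harmless variant, and your explicit interpolation to reach general $p\ge d_0$ is exactly what the paper means by ``using H\"older's inequality'' at the end.

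There is one genuine gap. You invoke the ``classical maximum principle (valid for smooth bounded coefficients)'' for $w:=u-M-\Phi$, but $w$ lies only in $W^2_{p,\mathrm{loc}}$ with $p<d$, which is precisely the regularity class for which the theorem is trying to \emph{establish} a maximum principle; neither the Hopf principle (needs $C^2$) nor ABP (needs $W^2_{d,\mathrm{loc}}$) applies to $w$ as written, so the step is circular. The paper sidesteps this by approximating $u$ as well (``proper approximations of $\Omega$, $\cL$ \emph{and} $u$''), so that in the core argument $\cL u\in L_d$, the auxiliary function is in $W^2_d$, and Theorem \ref{thm_abp} applies legitimately. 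Your approximation paragraph mollifies only the coefficients and the domain; adding a mollification of $u$ on each $\Omega_k$ (and passing to the limit using $u\in W^2_p(\Omega_k)$) closes the gap.
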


\begin{remark}[Neumann problem on half balls]
By employing even/odd reflection and Green's function representation, an estimate analogous to \eqref{eq1013} can be obtained even in the case of the Neumann boundary value problem on a half ball. More precisely, for $R>0$,  we have
\[
\sup_{B_R \cap \{x_1 > 0 \}} u_{+} \le \sup_{ \substack{|x| = R \\ x_1 > 0}} u_+ + 2R\sup_{ \substack{|x| < R \\ x_1 = 0} } \left( D_1u\right)_{-} + NR^{2 - d/d_0}  \| \left(  \cL u \right)_{-} \|_{L_{d_0}(B_R\cap\{x_1>0\})},
\]
where $N$ and $d_0$ are the constants from Theorem \ref{cor_main}.
See Remark \ref{adjoint_Neumann} for a discussion on the adjoint problems with Neumann boundary conditions.
\end{remark}

From Theorem \ref{cor_main}, we derive a Fabes-Stroock-type refined version of the integrability estimate for Green's functions.

\begin{theorem}\label{thm_main}
Let $\Omega$ be a bounded domain.
Suppose that the operator $\cL$ admits a Green's function $G_{\Omega}$ associated with $\Omega$.
Then there exist positive constants $N$ and $q>d/(d-1)$ depending only on $d$, $\delta$, and $\|b\|$ such that 
\begin{equation}
	\label{eq_main}
\sup_{x\in \Omega}\left(\int_{\Omega}G_{\Omega}(x,y)^{q}\,dy\right)^{1/q}\le N\operatorname{diam}(\Omega)^{2-d/p},
\end{equation}
where $p$ is the conjugate exponent of $q$, i.e., $1/p + 1/q = 1$.
\end{theorem}

Note that the results in \cite{MR2064654} depend on the profile of the function $b$, whereas the constant $N$ in Theorem \ref{thm_main} depends only on the upper bound $\|b\|$ of the norm $\|b\|_{L_d(\Omega)}$ (as well as on $d$ and $\delta$).
Hence, \eqref{eq_main} serves as a refined version of the classical estimates of Green's functions in Corollary \ref{cor1}.
On the other hand, the author in \cite{MR2064654} addressed a more general boundary value problem, specifically the oblique derivative problem.
We also mention the recent work \cite{DK2022}, which proves an Aleksandrov estimate under the more general assumption that the drift $b$ belongs to a Morrey class.
Note that applying the estimate in \cite{DK2022} to the case $b \in L_d$ yields a result that depends not only on the $L_d$-norm of $b$, but also on its profile.
\\

\section{Auxiliary estimates}
\label{abp auxiliary}
We begin this section by introducing the main ingredients used in proving main results.
The first is a well-known reverse H\"older inequality, which can be found in, for example, \cite{MR0717034}.
In the next proposition, $d(x) = \operatorname{dist}(x, \partial Q_1)$ for $x \in Q_1$.
For $f \in L_{1, \mathrm{loc}}$, $\cM f$ is the standard Hardy-Littlewood maximal function and 
\[
\cM_{r_0} f (x) := \sup_{0< r < r_0} \frac{1}{|B_r(x)|} \int_{B_r(x)} |f(z)| \,dz
\]
is the local maximal function for $r_0 \in (0, \infty)$.

\begin{proposition}
    \label{reverse}
    Let $1<q< \infty$. 
    Suppose that $0 \le g\in L_{q}(Q_1)$ and we have 
$$
\cM_{r_0d(x)} \left( g^q \right) \left( x \right)\le N_0 \left( \cM g \left(x \right) \right)^q
+\theta \, \cM \left( g^q \right)  \left( x \right)
$$
for almost every $x\in Q_1$, where $N_0 > 1$, $0\le \theta<1$, and $0<r_0<1$.\\

\noindent Then for  $p\in[q,q+\varepsilon)$, 
\begin{equation*}
    \left(\dashint_{Q_{1/2}}g^p \,dx \right)^{1/p}
    \le N\left(\dashint_{Q_1}g^q \,dx \right)^{1/q},
\end{equation*}
where $N$ and $\varepsilon$ are positive constants depending on $d,q,r_0,N_0$, and $\theta$.
\end{proposition}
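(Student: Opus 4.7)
The plan is to follow the classical stopping-time/good-$\lambda$ proof of Gehring's lemma, adapted to the maximal-function formulation of the hypothesis, which should be read as a pointwise reverse Hölder condition valid at every scale up to $r_0 d(x)$.

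First, I would truncate $g \mapsto g_M := g \wedge M$ so that all quantities are a priori finite, extracting an estimate whose constants are independent of $M$ and letting $M \to \infty$ at the end via monotone convergence.  For each level $\lambda > \lambda_0 := A \dashint_{Q_1} g_M^q$ with $A = A(d, r_0)$ large, I would perform a dyadic Calderón--Zygmund decomposition of $\{\cM(g_M^q) > \lambda\} \cap Q_{3/4}$, producing pairwise disjoint cubes $\{Q_k\}$ with $\lambda < \dashint_{Q_k} g_M^q \le 2^d \lambda$ and $g_M^q \le \lambda$ outside their union.  The choice of $A$ forces each $Q_k$ to have side length at most $r_0 d(x_k)$ for $x_k \in Q_k$, so the hypothesis applies at $x_k$ and yields
\[
\lambda \le \dashint_{Q_k} g_M^q \,dy \le N_0 (\cM g(x_k))^q + \theta \cM(g_M^q)(x_k).
\]
Splitting $g$ into the part above $\sigma \lambda^{1/q}$ and the remainder, then summing over $k$, produces a good-$\lambda$ inequality of the shape
\[
\int_{\{\cM(g_M^q) > \lambda\}\cap Q_{1/2}} g_M^q \,dx \le C \int_{\{g > c_1 \lambda^{1/q}\}\cap Q_1} g^q\,dx + C\theta \int_{\{\cM(g_M^q) > c_2\lambda\}\cap Q_1} g_M^q\,dx
\]
with $c_2 \ge 1$ after balancing $\sigma$ against the CZ threshold.

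Finally, I would multiply by $\lambda^{(p-q)/q - 1}$ and integrate in $\lambda \in (\lambda_0, \infty)$.  The layer-cake identity produces matching factors $q/(p-q)$ on both sides that cancel, leaving
\[
\int_{Q_{1/2}} g_M^p\,dx \le C\bigl(c_1^{-(p-q)} + \theta c_2^{-(p-q)/q}\bigr)\int_{Q_1} g_M^p\,dx + C \lambda_0^{p/q} |Q_1|.
\]
Choosing $p-q$ small enough in terms of $d, q, r_0, N_0, \theta$ makes the bracketed coefficient strictly less than one, after which a standard covering/iteration argument on a chain of nested cubes between $Q_{1/2}$ and $Q_1$ promotes this to a self-absorbing estimate, and sending $M \to \infty$ together with the definition of $\lambda_0$ yields the claimed reverse Hölder bound.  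I expect the absorption step to be the main obstacle: the LHS lives on $Q_{1/2}$ while the RHS lives on $Q_1$, so absorption is not immediate and requires either the nested-cube iteration or a strengthening of the good-$\lambda$ inequality via a Whitney-type covering to localize both sides to the same region.  A secondary delicate point is arranging $c_2 \ge 1$ simultaneously with the localization constraint $\operatorname{side}(Q_k) \lesssim r_0 d(x_k)$, which is what fixes the threshold $A$ in terms of $r_0$.
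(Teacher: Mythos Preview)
Your proposal is correct and follows the classical Gehring/stopping-time argument; the paper's own proof is simply a one-line reference to \cite[Theorem~V.1.2]{MR0717034} (Giaquinta) ``with minor changes,'' so what you have written is essentially a detailed reconstruction of exactly that argument. The two technical worries you flag (absorption across nested cubes and arranging $c_2\ge 1$ via the threshold $A=A(d,r_0)$) are precisely the ``minor changes'' needed to adapt Giaquinta's proof to the local-maximal-function hypothesis, and both are handled the way you indicate.
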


\begin{proof}
    By making minor changes to the proof of \cite[Theorem V.1.2]{MR0717034}, we establish the desired inequality.
\end{proof}

In the following lemma, we construct and estimate auxiliary functions to be used to prove the main theorem.
By applying dilation and employing the techniques from the proof of \cite[Lemma 2.4]{MR4252191}, along with Aleksandrov's original work on Monge-Ampère equations \cite{A58}, we obtain the following result.

\begin{lemma}\label{test estimate}
Let $r\in (0,\infty)$.
For a nonnegative function $f \in C_c^{\infty}(B_{2r})$ with $\|f\|_{L_d(B_{2r})} \le 1$, there exists a non-positive convex (smooth) function $z^{\varepsilon}$ and a constant $\varepsilon_0 > 0$ such that the following holds:\\

\noindent For $\varepsilon\in(0, \varepsilon_0)$ and $x \in B_{2r}$, we have
\begin{equation}
    \label{sup z Dz}
    r^{-1}|z^\varepsilon|+|Dz^\varepsilon|\le N(d, \delta, \|b\|)
\end{equation}
and 
\begin{equation}
	\label{eq0930_01}
 a^{ij}D_{ij}z^{\varepsilon}+b^i D_iz^{\varepsilon}
 \ge
 d\sqrt[d]{\det a} \cdot f^{\varepsilon} - I^\varepsilon,
\end{equation}
where $f^{\varepsilon} = f + \varepsilon$.
Here, $I^\varepsilon$ denotes an error term arising from the approximation procedure, which satisfies
\begin{equation}
\label{I epsilon}
 \lim_{\varepsilon\to0} \|I^\varepsilon\|_{L_d(B_{2r})} = 0.
\end{equation}
\end{lemma}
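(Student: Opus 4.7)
The plan is to adapt the convex-function construction of Krylov \cite[Lemma 2.4]{MR4252191} (itself inspired by Aleksandrov's Monge--Amp\`ere technique \cite{A58}) by folding the drift into the density, and then to regularize by mollification.

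\textbf{Scaling reduction.} First I would reduce to the unit scale via the change of variables $y=x/r$ together with $\hat z(y):=r^{-1}z(ry)$, $\hat a(y):=a(ry)$, $\hat b(y):=rb(ry)$, $\hat f(y):=rf(ry)$. Under this rescaling, $\|\hat b\|_{L_d(B_2)}\le\|b\|$ and $\|\hat f\|_{L_d(B_2)}=\|f\|_{L_d(B_{2r})}\le1$, the ellipticity constant $\delta$ is preserved, and the operator satisfies $\cL z(x)=r^{-1}\hat\cL\hat z(x/r)$. Consequently, the required estimates on $z^{\varepsilon}$ and the required $L_d$-decay of $I_{\varepsilon}$ on $B_{2r}$ are equivalent to the corresponding unit-scale statements on $B_2$.

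\textbf{Construction on the unit scale.} On $B_2$ I would set $g:=f+(d\delta)^{-1}|b|$, so that $\|g\|_{L_d(B_2)}\le M:=1+(d\delta)^{-1}\|b\|$. Applying the Aleksandrov--Krylov construction with $\theta=1$ to $g$ yields a nonpositive convex function $z$ on $B_4$ with $\|z\|_{L_\infty(B_4)}\le C(d,M)$ whose Monge--Amp\`ere measure dominates $c_d^{-1}g^d(1+|Dz|)^d$ on $B_2$. Since $aD^2z$ has nonnegative eigenvalues, the arithmetic--geometric mean inequality gives
\begin{equation*}
a^{ij}D_{ij}z \ge d\sqrt[d]{\det a}\,(\det D^2z)^{1/d} \ge d\sqrt[d]{\det a}\,g(1+|Dz|).
\end{equation*}
Using $(\det a)^{1/d}\ge\delta$ and the choice of $g$,
\begin{equation*}
d\sqrt[d]{\det a}\,g(1+|Dz|)\ge d\sqrt[d]{\det a}\,f+|b|(1+|Dz|)\ge d\sqrt[d]{\det a}\,f - b^iD_iz,
\end{equation*}
since $b^iD_iz\ge-|b||Dz|$. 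Thus, in the Aleksandrov sense, $\cL z\ge d\sqrt[d]{\det a}\,f$ on $B_2$. The $L^\infty$-bound on $B_4$ together with convexity produces a Lipschitz bound $|Dz|\le C(d,\delta,\|b\|)$ on $B_2$, giving both parts of \eqref{sup z Dz} after rescaling.

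\textbf{Regularization and the error $I_\varepsilon$.} Let $\eta_\varepsilon$ be a standard mollifier and set $z^\varepsilon:=z*\eta_\varepsilon$. Convexity, the $L^\infty$-bound, and the Lipschitz bound are all preserved by convolution, so \eqref{sup z Dz} transfers to $z^\varepsilon$. The pointwise inequality can then be written as
\begin{equation*}
a^{ij}D_{ij}z^\varepsilon+b^iD_iz^\varepsilon\ge d\sqrt[d]{\det a}\,f^\varepsilon-I_\varepsilon,
\end{equation*}
where $f^\varepsilon$ is the corresponding mollification of $f$ and $I_\varepsilon$ is the commutator error $a^{ij}(D_{ij}z*\eta_\varepsilon)-(a^{ij}D_{ij}z)*\eta_\varepsilon$ together with the analogous first-order term for $b^i$ and a term accounting for $d\sqrt[d]{\det a}(f-f^\varepsilon)$. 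This last step is where I expect the main obstacle: the first-order commutator is controlled because $Dz\in L^\infty$ and translation is $L_d$-continuous in $b$, but the second-order commutator is delicate because $D^2z$ is only a finite Radon measure. The plan is to exploit the boundedness of $a^{ij}$ together with the finiteness of the Monge--Amp\`ere mass $\int_{B_2}\det D^2z\,dx$, approximate $a^{ij}$ in $L_d$ by continuous functions, and apply dominated convergence to conclude $\|I_\varepsilon\|_{L_d(B_2)}\to0$, which via the scaling of step one yields \eqref{I epsilon}.
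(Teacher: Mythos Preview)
Your scaling reduction and the Aleksandrov--Krylov construction of a nonpositive convex barrier $z$ are sound, but the regularization step has a genuine gap. The second-order commutator
\[
C_\varepsilon(x):=a^{ij}(x)\,(D_{ij}z*\eta_\varepsilon)(x)-\bigl(a^{ij}D_{ij}z\bigr)*\eta_\varepsilon(x)
\]
cannot be shown to vanish in $L_d$ under your hypotheses. With $a^{ij}$ merely bounded measurable and $D^2z$ only a (PSD matrix-valued) Radon measure, approximating $a^{ij}$ in $L_d$ by continuous $\tilde a^{ij}$ leaves the piece $(a^{ij}-\tilde a^{ij})(D_{ij}z)^{\varepsilon}$; to make this small in $L_d$ you would need $(D_{ij}z)^{\varepsilon}$ bounded in some $L_q$, $q>1$, uniformly in $\varepsilon$, which you do not have (finiteness of the Monge--Amp\`ere mass gives only measure/$L_1$-type control). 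Dominated convergence fails for the same reason: the natural dominating function is only in $L_1$. There is also a definitional issue: if $D^2z$ has a singular part, the product $a^{ij}D_{ij}z$ is not well defined as a measure because $a^{ij}$ is only an $L_\infty$ equivalence class.

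The paper avoids this obstacle by reversing the order of operations: it first mollifies the data and coefficients (setting $f_r^\varepsilon=f_r+\varepsilon$, $a_r^\varepsilon=(a_r)^{(\varepsilon)}$, $b_r^\varepsilon=(|b_r|1_{B_2})^{(\varepsilon)}+\varepsilon$), and then invokes classical Monge--Amp\`ere regularity to produce genuinely smooth convex solutions $z_{1,r}^\varepsilon,\,z_{2,r}^\varepsilon$ of two equations, one carrying $f$ and one carrying $|b|$. The AM--GM step is then applied pointwise to the smooth Hessian of their sum, so no commutator with rough $a$ ever arises; the error $I_\varepsilon$ involves only $b_r^\varepsilon-|b_r|$ multiplied by uniformly bounded gradient factors, whence $\|I_\varepsilon\|_{L_d}\to0$ is immediate. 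Your route can be repaired along similar lines, or alternatively by applying AM--GM \emph{after} mollification to the smooth matrix $D^2z^\varepsilon$ and using the concavity of $A\mapsto(\det A)^{1/d}$ on PSD matrices (Jensen) to obtain $(\det D^2z^\varepsilon)^{1/d}\ge\bigl((\det D^2z)^{1/d}\bigr)^{\varepsilon}\ge (g(1+|Dz|))^{\varepsilon}$; this eliminates the $a$-commutator altogether, and the remaining first-order error $|b|\,(|Dz|)^{\varepsilon}-(|b|\,|Dz|)^{\varepsilon}$ does vanish in $L_d$ because $|Dz|\in L_\infty$ and $b\in L_d$.
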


The existence of $z^{\varepsilon}$ satisfying \eqref{sup z Dz}--\eqref{I epsilon} is essentially due to Aleksandrov \cite{A58}, and the corresponding statement can be found in \cite[Theorem 2.3]{MR4252191}.
Nevertheless, we would like to provide a brief description in a modern language for the reader's convenience.

\begin{proof}[\textbf{Proof of Lemma \ref{test estimate}.}]
We set $f_r^{\varepsilon}(x) := f_r(x) + \varepsilon$, where $f_r = f(rx) \in C_c^{\infty}(B_2)$.
We also set $a_r(x) = a(rx)$, $b_r(x) = rb(rx)$, $a_r^{\varepsilon} = a_r^{(\varepsilon)}$, and $b_r^{\varepsilon} = (|b_r|1_{B_2})^{(\varepsilon)} + \varepsilon$, where $g^{(\varepsilon)}$ means a standard mollification of $g$.
Recall that we have assumed that $a^{ij}$ is defined on $\mathbb{R}^d$ in \eqref{eq_elliptic}. 
(This is just for convenience since any elliptic coefficients defined on arbitrary domains can be extended to the whole space while preserving the ellipticity condition.)
We proceed in a few steps.\\

\noindent \emph{Auxiliary convex solutions.}
By classical regularity theory for Monge-Amp\'ere equations (see, for instance, \cite[Theorem 1.2]{TU83} or \cite[Theorems 1.1 and 7.1]{CNS84}), there exist unique \textit{non-positive smooth convex} functions $z_{1, r}^{\varepsilon}$ and $z_{2, r}^{\varepsilon}$ in $\overline{B_3}$ such that
\begin{equation}
	\label{eq0715_01}
\begin{aligned}
\sqrt[d]{\det D^2 z_{1, r}^{\varepsilon}} &= f_r^{\varepsilon}  \quad \textrm{and} \\  \sqrt[d]{\det D^2 z_{2, r}^{\varepsilon}} &= \left( d \sqrt[d]{\det a_r^{\varepsilon}} \right)^{-1}   \left(1 + (\varepsilon+|Dz_{2, r}^{\varepsilon}|^2 )^{1/2} \right)    \cdot b_r^{\varepsilon}
\end{aligned}	
\end{equation}
in $B_3$ and $z_{1, r}^{\varepsilon} = z_{2, r}^{\varepsilon} = 0$ on $\partial B_3$.
We now define
\[
z^{\varepsilon}(x) = r^2z_r^{\varepsilon} \left(x/r \right), \quad \textrm{where}\quad z_r^{\varepsilon} = z_{1, r}^{\varepsilon} + N_1r^{-1}z_{2, r}^{\varepsilon}.
\]
We shall establish the $W_{\infty}^1$ estimate \eqref{sup z Dz}, the comparison inequality \eqref{eq0930_01}, and the error estimate \eqref{I epsilon}, based on the properties of convex functions $z_{1,r}^{\varepsilon}$ and $z_{2,r}^{\varepsilon}$.\\

\noindent \emph{$W_{\infty}^1$ estimates.}
First, note that $z_{1, r}^{\varepsilon}$ and $z_{2, r}^{\varepsilon}$ enjoy the Aleksandrov-type maximum principle (\textit{cf}. \cite[Ch. 9]{MR0737190})
\[
|z_{1, r}^{\varepsilon}| \le N_1(r^{-1} + \varepsilon)  \quad \textrm{and} \quad |z_{2, r}^{\varepsilon}| \le N_2 + N_1\varepsilon,
\]
where $N_1=N_1(d)$ and $N_2=N_2(d,\delta,\|b\|)$.
Here, we use the assumption that $\|f\|_{L_d(B_{2r})} \le 1$ in the first inequality.

To estimate $Dz_{1,r}^\varepsilon$ and $Dz_{2,r}^\varepsilon$ in $B_2$, we make use of  the convexity and non-positivity of $z_{1, r}^{\varepsilon}$ and $z_{2, r}^{\varepsilon}$, together with the fact that $z_{1,r}^\varepsilon = z_{2,r}^\varepsilon = 0$ on $\partial B_3$. 
More precisely, to estimate $|Dz_{1,r}^\varepsilon(x)|$ for $x \in B_2$, suppose that $|D z_{1,r}^\varepsilon(x)| \neq 0$.
Let $e = D z_{1,r}^\varepsilon(x) / |D z_{1,r}^\varepsilon(x)|$ and $t_0$  be a positive number such that $x + t_0 e \in \partial B_3$.
We define $g(t) = z_{1,r}^\varepsilon(x + t e)$.
Then, by convexity and the estimate on $|z_{1,r}^\varepsilon|$, we obtain
\begin{equation}
	\label{eq_250801_1}
|D z_{1,r}^\varepsilon(x)| = g'(0) \le \frac{g(t_0) - g(0)}{t_0} = \frac{-g(0)}{t_0} \le \frac{|z_{1,r}^\varepsilon(x)|}{3 - |x|} \le N_1 (r^{-1} + \varepsilon).
\end{equation}
A similar argument shows that $|D z_{2,r}^\varepsilon(x)| \le N_2 + N_1 \varepsilon$ for $x \in B_2$.
Then, together with the above estimates, it is straightforward to check that $z^{\varepsilon}$ satisfies \eqref{sup z Dz} for sufficiently small $\varepsilon$.\\

\noindent \emph{Comparison and error estimate.}
It remains to show \eqref{eq0930_01} and \eqref{I epsilon}.
Note that 
\begin{align*}
a^{ij}_rD_{ij}z_{1, r}^{\varepsilon} + b^i_r D_i z_{1, r}^{\varepsilon} &\ge d   \sqrt[d]{\det a_r}  \sqrt[d]{\det D^2 z_{1, r}^{\varepsilon}} - |b_r | |D z_{1, r}^{\varepsilon}| \\
&= d   \sqrt[d]{\det a_r} \cdot f_r^{\varepsilon} - |b_r | |D z_{1, r}^{\varepsilon}| \\
&\ge d   \sqrt[d]{\det a_r} \cdot f_r^{\varepsilon} - |b_r | N_1(r^{-1}+\varepsilon)
\end{align*}
in $B_2$, where the last inequality is due to \eqref{eq_250801_1} in $ B_2$.
Similarly, we have
\begin{align*}
a^{ij}_rD_{ij}z_{2, r}^{\varepsilon} + b_r^i D_i z_{2, r}^{\varepsilon} &\ge d   \sqrt[d]{\det a_r}  \sqrt[d]{\det D^2 z_{2, r}^{\varepsilon}} - |b_r | |D z_{2, r}^{\varepsilon}| \\
&=  \left( \frac{\det a_r}{\det a_r^{\varepsilon}}  \right)^{1/d} \cdot   \left(1 +(\varepsilon+ |Dz_{2, r}^{\varepsilon}|^2)^{1/2} \right)   \cdot b_r^{\varepsilon} \, - \, |b_r | |D z_{2, r}^{\varepsilon}|.
\end{align*}
Thus, from the definition $z_r^{\varepsilon} = z_{1, r}^{\varepsilon} + N_1r^{-1}z_{2, r}^{\varepsilon}$, we have
\begin{equation}
	\label{eq250801_02}
a^{ij}_rD_{ij}z_{ r}^{\varepsilon} + b^i_r  D_i z_{ r}^{\varepsilon} \ge d   \sqrt[d]{\det a_r} \cdot f_r^{\varepsilon} - I_{r}^{\varepsilon} \quad \textrm{in} \,\, B_2,
\end{equation}
where
\begin{align*}
I_{r}^{\varepsilon} &:=   |b_r | N_1( r^{-1} + \varepsilon)
- N_1r^{-1}\left( \frac{\det a_r}{\det a_r^{\varepsilon}}  \right)^{1/d} \cdot   \left(1 +(\varepsilon+ |Dz_{2, r}^{\varepsilon}|^2)^{1/2} \right)     \cdot b_r^{\varepsilon} \\
 &+  N_1r^{-1}|b_r | |D z_{2, r}^{\varepsilon}|\\
 &= N_1r^{-1}|b_r | \cdot (1 + |D z_{2, r}^{\varepsilon}|) - \left( \frac{\det a_r}{\det a_r^{\varepsilon}}  \right)^{1/d} \cdot N_1r^{-1}b_r^{\varepsilon} \cdot    \left(1 +(\varepsilon+ |Dz_{2, r}^{\varepsilon}|^2)^{1/2} \right)     \\
 &+ N_1 |b_r| \varepsilon. 
\end{align*}
Recall that $b_r^{\varepsilon} = (|b_r|1_{B_2})^{(\varepsilon)} + \varepsilon$ and then, by dominated convergence theorem with the help of the bound $|Dz_{2, r}^{\varepsilon}| \le N_2 + N_1 \varepsilon$ in $B_2$, we see that
\[
\| I_r^{\varepsilon}\|_{L_d(B_2)} \to 0 \quad  \textrm{as} \quad \varepsilon \to 0.
\]
Finally, by using the relation $z^{\varepsilon}(x) = r^2 z_r^{\varepsilon}(x/r)$, $a(x) = a_r(x/r)$, $b(x) = r^{-1}b_r(x/r)$, and $f^{\varepsilon}(x) = f_r^{\varepsilon}(x/r)$, we obtain \eqref{eq0930_01} for $x \in B_{2r}$ together with the error estimate \eqref{I epsilon} for $I^{\varepsilon}(x) := I_r^{\varepsilon}(x/r)$.
The lemma is proved.
\end{proof}

We end this section by presenting the classical ABP estimate which is read as the following theorem.
\begin{theorem}\label{thm_abp}
Let $\Omega$ be a bounded domain.   Then for any $u\in W_{d,\mathrm{loc}}^2(\Omega) \cap C(\overline{\Omega})$,
    \begin{equation}
    \label{eq1001}
        \sup_{\Omega}u_{+}\le \sup_{\partial \Omega}u_+ +N\operatorname{diam}(\Omega)  \|(\cL u)_{-}\|_{L_d(\Omega)},
    \end{equation}
where $N=N(d,\delta,\|b\|)$.
\end{theorem}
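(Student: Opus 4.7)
The plan is to prove Theorem~\ref{thm_abp} by a comparison argument using the convex test function $z^\varepsilon$ from Lemma~\ref{test estimate}, which is constructed precisely to absorb the $L_d$ drift in a scale-invariant manner.

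First I would reduce by subtracting $\sup_{\partial\Omega}u_+$ from $u$ (a constant in the kernel of $\cL$), so that $u\le 0$ on $\partial\Omega$ and the goal becomes bounding $M:=\sup_\Omega u_+$. Assuming $M>0$ and, after translation, $\Omega\subset B_R$ with $R=\operatorname{diam}(\Omega)$, set $g:=(\cL u)_-$ and assume $\|g\|_{L_d(\Omega)}>0$. Approximating $g/\|g\|_{L_d}$ in $L_d$ by nonnegative $f_\eta\in C_c^\infty(B_R)$ with $\|f_\eta\|_{L_d}\le 1$, I apply Lemma~\ref{test estimate} with $r=R/2$ to obtain $z^\varepsilon\le 0$ on $B_R$ satisfying $|z^\varepsilon|+R|Dz^\varepsilon|\le NR$ and
\[
\cL z^\varepsilon\ge d(\det a)^{1/d}f_\eta^\varepsilon - I_\varepsilon, \qquad \|I_\varepsilon\|_{L_d}\to 0 \text{ as } \varepsilon\to 0,
\]
with $N=N(d,\delta,\|b\|)$. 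The key point is that $N$ depends on $b$ only through $\|b\|$, thanks to the Monge--Amp\`ere construction behind Lemma~\ref{test estimate}.

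Now set $\lambda:=\|g\|_{L_d}/(d\delta)$ and $v:=u+\lambda z^\varepsilon$. On $\partial\Omega$, $v\le 0$ since both $u$ and $z^\varepsilon$ are nonpositive there. Using $(\det a)^{1/d}\ge\delta$ and the decomposition $\cL u=(\cL u)_+-g$, direct computation gives
\[
\cL v = \cL u + \lambda\cL z^\varepsilon \ge (\cL u)_+ + \bigl(\|g\|_{L_d}f_\eta^\varepsilon - g\bigr) - \lambda I_\varepsilon,
\]
so $(\cL v)_-\le\bigl|\|g\|_{L_d}f_\eta^\varepsilon-g\bigr|+\lambda|I_\varepsilon|\to 0$ in $L_d$ as $\eta,\varepsilon\to 0$. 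If an ABP-type estimate for $\cL$ applied to $v$ gives $\sup_\Omega v_+\to 0$, then $\sup_\Omega u\le\sup_\Omega v+\lambda\sup_\Omega|z^\varepsilon|\le o(1)+\lambda NR$, yielding $\sup_\Omega u_+\le (NR/(d\delta))\|(\cL u)_-\|_{L_d(\Omega)}$, which is the claim.

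The main obstacle will be the ``ABP for $v$'' step above, which is apparently circular. My plan to resolve this is a truncation bootstrap: replace $b$ by the bounded truncation $b_K:=b\mathbf{1}_{|b|\le K}\in L_\infty$ and apply the \emph{classical} ABP estimate for $\cL_K:=a^{ij}D_{ij}+b_K^iD_i$ to $v$. Since $\cL_K v = \cL v - (b-b_K)^iD_iv$, one has $(\cL_K v)_-\le(\cL v)_-+|b-b_K|\,|Dv|$, and the extra term is controlled by Sobolev-type interior estimates on $v$ (using the uniform bound $|Dz^\varepsilon|\le N$) together with $\|b-b_K\|_{L_d}\to 0$ as $K\to\infty$. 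Passing the limits $\eta,\varepsilon\to 0$ followed by $K\to\infty$ in this order\,---\,with smooth coefficients and smooth boundary assumed throughout via Remark~\ref{rmk_Green_2}\,---\,then yields the theorem with a constant depending only on $d,\delta,\|b\|$.
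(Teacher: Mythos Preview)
The paper does not prove Theorem~\ref{thm_abp}; it is quoted as the classical ABP estimate (e.g.\ Gilbarg--Trudinger, Theorem~9.1), whose contact-set/normal-mapping proof already yields a constant depending only on $d$, $\delta$, and $\|b\|_{L_d}$. Your proposal is therefore an alternative derivation via Lemma~\ref{test estimate}, but the bootstrap step contains a genuine gap.

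The problem is the sentence ``the extra term is controlled by Sobolev-type interior estimates on $v$ \ldots\ together with $\|b-b_K\|_{L_d}\to 0$.'' Two issues. First, $|Dv|=|Du+\lambda Dz^\varepsilon|$, and while $|Dz^\varepsilon|\le N$, for $Du$ you only have $u\in W^{2}_{d,\mathrm{loc}}$, so $Du\in W^{1}_{d,\mathrm{loc}}\hookrightarrow \mathrm{BMO}_{\mathrm{loc}}$, \emph{not} $L_\infty$; there is no Sobolev bound that places $|b-b_K|\,|Du|$ in $L_d$ with a norm you control (note $|b\cdot Du|\le |b|\,|Du|$ is the wrong inequality here). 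Second, and more fundamentally, your ``classical ABP for $\cL_K$'' either (a) has constant depending on $\|b_K\|_{L_d}\le\|b\|$, in which case you are invoking Theorem~\ref{thm_abp} itself and the argument is circular, or (b) has constant $N_K$ depending on $\|b_K\|_{L_\infty}=K$, in which case $N_K\to\infty$ while the rate at which $\||b-b_K|\,|Dv|\|_{L_d}\to 0$ depends on the specific profiles of $b$ and $u$, not just on $\|b\|$; the product $N_K\cdot\||b-b_K|\,|Dv|\|_{L_d}$ has no reason to vanish. The standard proof bypasses all of this by working on the upper contact set of $u$, where the gradient is controlled geometrically and the Monge--Amp\`ere structure handles the $b\cdot Du$ term directly through the change-of-variables formula.
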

Using \eqref{eq1001}, one verifies that the Green's function $G_{\Omega}$ is nonnegative, \textit{e.g.}, the proof of Lemma \ref{lem3}.
Moreover, by using duality, we obtain the corollary below.

\begin{corollary}\label{cor1}
Suppose that $\cL$ admits a Green's function $G_{\Omega}$ associated with $\Omega$.
Then we have
\begin{equation*}
    \sup_{x\in \Omega}\left(\int_{\Omega}G_\Omega(x,y)^{d/(d-1)}\,dy\right)^{(d-1)/d}\le N \operatorname{diam}(\Omega),
\end{equation*}
where $N=N(d,\delta,\|b\|)$.
\end{corollary}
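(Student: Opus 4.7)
The plan is to obtain the estimate by dualizing Theorem \ref{thm_abp} against the defining representation formula for the Green's function. Granting the existence of a smooth Green's function (by Remark \ref{rmk_Green_2} the bound will pass to the general case via approximation), the strategy is to test $G_\Omega(x,\cdot)$ against nonnegative $f\in L_d(\Omega)$ of unit norm and apply the ABP estimate to the resulting $u$.

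First I would verify nonnegativity of $G_\Omega$, as the statement preceding the corollary requires. Fix a nonnegative $f\in L_d(\Omega)$ and let $u(x)=\int_\Omega G_\Omega(x,y)f(y)\,dy$, which solves $-\cL u=f$ in $\Omega$ with $u=0$ on $\partial\Omega$. Apply \eqref{eq1001} to $-u$: since $\cL(-u)=f\ge0$, we get $(\cL(-u))_-=0$, and since $(-u)_+=0$ on $\partial\Omega$, the ABP estimate forces $\sup_\Omega(-u)_+\le0$, i.e.\ $u\ge0$. Because $f\ge0$ was arbitrary, this yields $G_\Omega(x,\cdot)\ge 0$ a.e.\ for every $x\in\Omega$.

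Next, for the main bound, I keep the same $u$ and $f$ but now apply \eqref{eq1001} directly to $u$. Since $-\cL u=f$, we have $(\cL u)_-=f$, and $u_+\equiv0$ on $\partial\Omega$, so
\begin{equation*}
\int_\Omega G_\Omega(x,y)f(y)\,dy=u(x)=u_+(x)\le N\operatorname{diam}(\Omega)\,\|f\|_{L_d(\Omega)},
\end{equation*}
with $N=N(d,\delta,\|b\|)$. Taking the supremum over all nonnegative $f\in L_d(\Omega)$ with $\|f\|_{L_d(\Omega)}\le 1$, and using the duality characterization
\begin{equation*}
\Norm{G_\Omega(x,\cdot)}_{L_{d/(d-1)}(\Omega)}=\sup\set{\int_\Omega G_\Omega(x,y)f(y)\,dy:\ f\ge0,\ \|f\|_{L_d(\Omega)}\le 1}
\end{equation*}
(valid because $G_\Omega(x,\cdot)\ge0$ from the previous step), I obtain the claimed inequality for every $x\in\Omega$, and then take the supremum in $x$.

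There is no real obstacle here: the argument is a textbook duality pairing with the ABP estimate, the only subtleties being (i) the nonnegativity of $G_\Omega$, which is dispatched in the first step above, and (ii) the fact that Theorem \ref{thm_abp} requires $u\in W^2_{d,loc}(\Omega)\cap C(\overline\Omega)$; by Remark \ref{rmk_Green} this regularity is available under smoothness assumptions on $a^{ij}$, $b^i$, and $\partial\Omega$, and by Remark \ref{rmk_Green_2} the resulting estimate, whose constant depends only on $d$, $\delta$, and $\|b\|$, survives the standard approximation procedure used to define Green's functions in the general case.
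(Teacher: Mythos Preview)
Your argument is correct and is precisely the duality argument the paper has in mind: the paper only states ``Using \eqref{eq1001}, we verify that the Green's function $G_{\Omega}$ is nonnegative. Moreover, by using duality, we obtain the corollary below,'' and your proposal fills in exactly these two steps. The handling of the regularity assumptions via Remarks~\ref{rmk_Green} and~\ref{rmk_Green_2} is also consistent with the paper's conventions.
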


\section{Proofs of the main results}
	\label{proof abp}
Our strategy for proving the main results, Theorems~\ref{cor_main} and~\ref{thm_main}, is as follows.  
In Section~\ref{sec_key}, we derive a key estimate (Proposition~\ref{prop1002}) by combining Proposition~\ref{prop0931} and Corollary~\ref{cor4}, in order to apply a Gehring-type lemma.
This estimate is then used in Section~\ref{proof_main_0419} to establish the higher integrability of the Green's function (see~\eqref{eq_0418_1}).  
Finally, the main results are deduced from~\eqref{eq_0418_1}.\\

\subsection{\textbf{Key estimate for Gehring-type lemma}}
	\label{sec_key}

We start with introducing a notion of solution to the adjoint operator $\cL^*$.
We say $v\in L_{1,\mathrm{loc}}(\Omega)$ is a nonnegative solution of an adjoint problem $\cL^* v=0$ if $v\ge0$, and 
\begin{equation}	
	\label{eq240815_01}
\int_\Omega v \cL u \, dx = 0
\end{equation}
for all $u\in C_c^{\infty}(\Omega)$ with $u\ge0$.\\

We establish the following estimate for nonnegative homogeneous solutions of the adjoint operator $\cL^*$.
\begin{proposition}
\label{prop0931}
Let $v$ be a nonnegative solution of $\cL^* v=0$ in $\Omega$ such that $v^{d/(d-1)} \in L_{1, \mathrm{loc}}(\Omega)$.
There exist positive constants $N$ and $K  (\ge 1)$ depending only on $d,\delta$, and $\|b\|$ such that 
\begin{equation*}
\dashint_{B_R(x_0)}v(y)^{d/(d-1)}\,dy \le N \left( \dashint_{B_{KR}(x_0)}v(y)\,dy \right)^{d/(d-1)}
\end{equation*}
\begin{equation*}
    +\frac{1}{8}\dashint_{B_{KR}(x_0)}v(y)^{d/(d-1)}\,dy,
\end{equation*}
where  $x_0 \in \Omega$ and $R\in(0,\infty)$ satisfy $B_{KR}(x_0) \subset \Omega$.
\end{proposition}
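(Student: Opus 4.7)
The plan is to use $L_d$-duality to reduce the estimate to a bound on $\int_{B_R(x_0)} vf\,dy$ for smooth $f\ge 0$ with $\|f\|_{L_d(B_R(x_0))}\le 1$, and then to evaluate the adjoint equation $\int v\,\cL u\,dy=0$ against an Aleksandrov test function. Fix such an $f$, set $r:=KR/2$ with $K>1$ to be chosen later, and extend $f$ by zero to $C_c^\infty(B_{2r}(x_0))$. A translation of Lemma \ref{test estimate} produces a smooth nonpositive convex function $z^{\varepsilon}$ on $B_{2r}(x_0)$, vanishing on $\partial B_{2r}(x_0)$, with $|z^{\varepsilon}|\le Nr$, $|Dz^{\varepsilon}|\le N$ (where $N=N(d,\delta,\|b\|)$), and $\cL z^{\varepsilon}\ge d\delta f^{\varepsilon}-I_{\varepsilon}$, $\|I_{\varepsilon}\|_{L_d(B_{2r}(x_0))}\to 0$.

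Next, choose a cutoff $\eta\in C_c^\infty(B_{2r}(x_0))$ with $\eta\equiv 1$ on $B_R(x_0)$ and $|D^k\eta|\le C/(KR)^k$ for $k=1,2$. Since $-z^{\varepsilon}\eta\in C_c^\infty(\Omega)$ is nonnegative, the adjoint relation gives $0=\int v\,\cL(-z^{\varepsilon}\eta)\,dy$. The product rule
\[
\cL(z^{\varepsilon}\eta)=\eta\,\cL z^{\varepsilon}+z^{\varepsilon}\,\cL\eta+2a^{ij}D_i z^{\varepsilon} D_j\eta,
\]
combined with the lower bound on $\cL z^{\varepsilon}$, yields
\[
d\delta\int v\eta f^{\varepsilon}\,dy\le -\int v\bigl[z^{\varepsilon}\cL\eta+2a^{ij}D_i z^{\varepsilon} D_j\eta\bigr]dy+\int v\eta I_{\varepsilon}\,dy.
\]
Using $|z^{\varepsilon}|\le Nr=NKR/2$, $|Dz^{\varepsilon}|\le N$, and $|\cL\eta|\le \delta^{-1}|D^2\eta|+|b||D\eta|$, the non-drift pieces contribute a term of order $\frac{N}{KR}\int_{B_{KR}(x_0)}v\,dy$, while the drift piece $|z^{\varepsilon}||b||D\eta|$ contributes $N\int_{B_{KR}(x_0)}v|b|\,dy\le N\|b\|\,\|v\|_{L_{d/(d-1)}(B_{KR}(x_0))}$ by H\"older. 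The $I_{\varepsilon}$ term vanishes as $\varepsilon\to 0$ since $v\in L_{d/(d-1), loc}(\Omega)$.

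Passing to the limit, using $\eta f=f$ on $B_R(x_0)$, dividing by $|B_R|$, taking the supremum over $f$, and using $|B_{KR}|/|B_R|=K^d$ together with the duality identity $\bigl(\dashint_{B_R}v^{d/(d-1)}\bigr)^{(d-1)/d}=|B_R|^{1/d}\sup_{f}\dashint_{B_R}vf\,dy$, I arrive at
\[
\Bigl(\dashint_{B_R(x_0)}v^{d/(d-1)}\Bigr)^{(d-1)/d}\le N_1 K^{d-1}\dashint_{B_{KR}(x_0)}v\,dy+N_1 K^{d-1}\|b\|\,\Bigl(\dashint_{B_{KR}(x_0)}v^{d/(d-1)}\Bigr)^{(d-1)/d},
\]
and I close the argument by choosing $K=K(d,\delta,\|b\|)>1$ so small that $N_1 K^{d-1}\|b\|\le \tfrac18$. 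The main obstacle is precisely this drift piece: after H\"older and the volume-ratio conversion $\bigl(|B_{KR}|/|B_R|\bigr)^{(d-1)/d}=K^{d-1}$, the coefficient of the absorption term scales like $K^{d-1}\|b\|$, which forces $K$ to \emph{shrink} toward $1$ as $\|b\|$ grows, and one must bookkeep the $\|b\|$-dependence of the constants from Lemma \ref{test estimate} to confirm such a $K>1$ always exists. Verifying that $-z^{\varepsilon}\eta$ is an admissible test function is, by contrast, immediate, since $\eta$ vanishes near $\partial B_{2r}(x_0)\subset\Omega$ and $z^{\varepsilon}$ is smooth there.
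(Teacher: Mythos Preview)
Your setup—testing the adjoint relation against $z^{\varepsilon}\eta$ with $z^{\varepsilon}$ from Lemma \ref{test estimate}—matches the paper and is the right opening move. The gap is in the absorption at the end, and it cannot be repaired by bookkeeping.

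First a minor but fatal point for your strategy: with $\eta\equiv 1$ on $B_R$ and $\operatorname{supp}\eta\subset B_{KR}$, the honest bound is $|D^k\eta|\le C/((K-1)R)^k$, not $C/(KR)^k$. As $K\to 1^+$ these constants blow up, so you cannot send $K$ toward $1$.

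More fundamentally, the constant $N_1$ multiplying $K^{d-1}\|b\|$ is bounded \emph{below} by a positive number depending only on $d,\delta$: the contribution of the $z_{1,r}^{\varepsilon}$ part of the Aleksandrov function in Lemma \ref{test estimate} (the part that solves $\det D^2z=(f^{\varepsilon})^d$ and does not see $b$) already produces $|z^{\varepsilon}|\gtrsim_{d} r$. Hence $N_1K^{d-1}\|b\|\ge N_1(d,\delta)\|b\|$ for every $K>1$, and this cannot be made $\le 1/8$ once $\|b\|$ is large. The proposed choice ``$K>1$ so small that $N_1K^{d-1}\|b\|\le 1/8$'' is impossible in general.

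What you have overlooked is that $D\eta$ is supported in the \emph{annulus} $B_{KR}\setminus B_R$, so the H\"older step actually yields the local norm $\|b\|_{L_d(B_{KR}\setminus B_R)}$ rather than the global $\|b\|$. The paper takes $K=2$ at this basic step, obtaining
\[
\dashint_{B_r}v^{d/(d-1)}\,dy\le N\Bigl(\dashint_{B_{2r}}v\,dy\Bigr)^{d/(d-1)}+N\|b\|_{L_d(B_{2r}\setminus B_r)}^{d/(d-1)}\dashint_{B_{2r}}v^{d/(d-1)}\,dy,
\]
and then \emph{iterates} over $r=R,2R,\ldots,2^{m-1}R$. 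After $m$ steps the absorption coefficient is
\[
N_0^{m}\prod_{n=0}^{m-1}\|b\|_{L_d(\Gamma_n)}^{d/(d-1)},\qquad \Gamma_n=B_{2^{n+1}R}\setminus B_{2^nR}\ \text{disjoint},
\]
and by the arithmetic--geometric mean inequality this is at most $(m^{-1}N_0^{d-1}\|b\|)^{m/(d-1)}$, which is $\le 1/8$ for $m=m(d,\delta,\|b\|)$ large enough. The final constant is $K=2^m$, a \emph{large} number, not one close to $1$. This iteration over disjoint annuli is precisely the mechanism that neutralises an arbitrarily large $\|b\|$, and it is the missing idea in your proposal.
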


\begin{proof}
By translation, we may assume that $x_0 = 0$.
Let $r$ be a positive number such that $B_{4r}\subset \Omega$.
For a nonnegative function $f \in C_c^{\infty}(B_{2r})$ with $\|f\|_{L_d(B_{2r})} \le 1$, we take $z$ from Lemma \ref{test estimate}.
Let $\varphi$ be a nonnegative smooth function such that $\varphi(x)=1$ if $|x|\le r $ and $\varphi(x)=0$ if $|x|>3r/2$ with 
\begin{equation}
    \label{cut off estiamtes}
|D\varphi|\le \frac{N}{r}I_{B_{2r}\setminus B_r} \quad \text{and} \quad |D^2\varphi|\le \frac{N}{r^{2}}I_{B_{2r}\setminus B_r}. 
\end{equation}
Then, by  \eqref{eq0930_01} of Lemma \ref{test estimate}, we see that for any $\varepsilon\in(0,r)$,
\[
\int_{B_r}v(y)f^\varepsilon(y)\,dy 
\le
N \int_{B_r}v(y)(\cL z^\varepsilon(y)+I_\varepsilon(y))\,dy
\]
\[
\le 
N \int_{B_{3r/2}}v(y) \left( \cL z^\varepsilon(y) + I_\varepsilon (y) \right)  \varphi(y)\,dy
= N \int_{B_{3r/2}}v(y)\cL(z^\varepsilon \varphi)(y) \,dy
\]
\[
+ N\int_{B_{3r/2}}v(y)\left(\cL z^\varepsilon(y)\varphi(y)-\cL(z^\varepsilon \varphi)(y) \right)\,dy
 + N \int_{B_{3r/2}}v(y) I_\varepsilon (y)\varphi(y)\,dy
\]
\[
\le N \int_{B_{2r}}v(y)| I_\varepsilon (y)| \,dy 
+ Nr^{-1}\int_{B_{2r}}v(y)|Dz^\varepsilon(y)| \,dy
\]
\[
+Nr^{-2}\int_{B_{2r}}v(y)|z^\varepsilon(y)| \,dy
+Nr^{-1}\int_{B_{2r}\setminus B_{r}}v(y)|b(y)||z^\varepsilon(y)| \,dy,
\]
where we use $\eqref{cut off estiamtes}$ and the fact that $\cL^*v = 0$ in $\Omega$ to obtain the last inequality.
By applying the pointwise estimates \eqref{sup z Dz} to the above inequalities and H\"older's inequality, we reach to
\[
\int_{B_r}v(y)f^\varepsilon(y)\,dy
\le N \int_{B_{2r}}v(y)| I_\varepsilon (y)| \,dy +Nr^{-1}\int_{B_{2r}}v(y)\,dy
\]
\[
+N\int_{B_{2r}\setminus B_r}v(y)|b(y)|\,dy
\]
\[
 \le Nr^{-1}\int_{B_{2r}}v(y)\,dy
+ N \left( \|b\|_{L_d(B_{2r}\setminus B_{r})} + \| I_{\varepsilon} \|_{  L_d(B_{2r}) } \right)\|v\|_{L_{d/(d-1)}(B_{2r})}.
\]
By taking $\varepsilon \to 0$ to the above estimates and utilizing \eqref{I epsilon}, it holds that
\[
\int_{B_{r}}v(y)f(y)\,dy
=\lim_{\varepsilon \to 0} \int_{B_{r}}v(y)f^{\varepsilon}(y)\,dy
\]
\[
\le Nr^{-1}\int_{B_{2r}}v(y)\,dy
+ N \|b\|_{L_d(B_{2r}\setminus B_{r})}\|v\|_{L_{d/(d-1)}(B_{2r})}
\]
Since the above inequalities hold for any positive $f \in C_c^{\infty}(B_{2r})$ with $\| f\|_{L_d} \le 1$, we have
\begin{equation} \label{thm2_eq1} 
\dashint_{B_{r}}v^{d/(d-1)}\,dy
\le N\left(\dashint_{B_{2r}}v\,dy\right)^{d/(d-1)}
+ N \|b\|_{L_d(B_{2r}\setminus B_{r})}^{d/(d-1)}\dashint_{B_{2r}}v^{d/(d-1)}\,dy.   
\end{equation}

For $n=0,1,2, \ldots$, take $r=R_n:=2^{n}R$ and let $\Gamma_n=B_{R_{n+1}}\setminus B_{R_n}$, 
\[
A_n = \dashint_{B_{R_n}}v^{d/(d-1)}\,dy, \quad B_n=\|b\|_{L_d(\Gamma_{n})}^{d/(d-1)}, \quad C_n=\left(\dashint_{B_{R_{n+1}}}v\,dy\right)^{d/(d-1)}.
\]
Then we rewrite \eqref{thm2_eq1} as
\begin{equation*}
    A_n\le {N}_0 C_n + N_0B_nA_{n+1}, \quad \forall n = 0, 1, \cdots,
\end{equation*}
where ${N}_0=N_0(d,\delta,\|b\|)$.
We apply the above inequality recursively to see that

\begin{equation*}
\begin{split}
    A_0&\le 
    \sum_{n=0}^{m-1}N_0^{n+1}\left(\prod_{k=0}^{n-1}B_k\right) C_n
    +N_0^{m}\left(\prod_{n=0}^{m-1}B_n\right) A_{m}
    \\
    &\le  
     NC_{m-1}
     +N_0^{m}\left(\prod_{n=0}^{m-1}B_n\right) A_{m},
\end{split}
    \end{equation*}
    where $N = N(d, \delta, \|b\|, m)$.
By the inequality of arithmetic and geometric means, we see 
\[
N_0^{m}\left(\prod_{n=0}^{m-1}B_n\right)\le \left(m^{-1}N_0^{d-1}\|b\|^d \right)^{m/(d-1)}.
\]
Take sufficiently large $m=m(d,\delta,\|b\|)$, so that the last expression of the above inequality is less than $1/8$. 
Thus, if $B_{2^mR} \subset \Omega$, we have that 
\begin{equation*}
    \dashint_{B_{R}}v^{d/(d-1)}\,dy
\le N\left(\dashint_{B_{2^m R}}v\,dy\right)^{d/(d-1)}
+ \frac{1}{8}\dashint_{B_{2^m R}}v^{d/(d-1)}\,dy,   
\end{equation*}
    where $N = N(d, \delta, \|b\|, m)$.
By taking $K=2^m$, the proposition is proved.
\end{proof}

\begin{remark}
	\label{adjoint_Neumann}
The proof of Proposition \ref{prop0931} can be applied to solutions to adjoint problems $\cL^*v = 0$ on a half ball with homogeneous Neumann boundary conditions on the flat boundary, that is, to $v$ satisfying
\[
\int_{B_r \cap \{x_1 > 0\}} v \cL u = 0
\]
for any $u \in C^{2}( \overline{B_r \cap \{x_1 > 0\}})$ with compact support in $B_r \cap \{x_1 \ge 0\}$ and $D_1u = 0$.
Indeed, by Lemma \ref{test estimate} with considering proper even/odd extensions of $f$, $a^{ij}$, and $b^i$ from $B_r \cap \{x_1 \ge 0\}$ to $B_r$, we have a non-positive convex function $z^{\varepsilon}$ defined on $B_r$ satisfying \eqref{sup z Dz}--\eqref{I epsilon}.
Then by the uniqueness of solutions to Monge-Amp\'ere equations \eqref{eq0715_01} with an even symmetry property of determinant of Hessian (\textit{i.e.}, $\det D^2f  (x) =\left( \det D^2g \right) \left( -x_1, x' \right)$ for $f(x) = g(-x_1, x')$, $x' \in \mathbb{R}^{d-1}$), one immediately verify that $z^{\varepsilon}$ is even in $x_1$, that is, $D_1z^{\varepsilon} = 0$ on $B_r \cap \{x_1 = 0\}$.
This enables us to follow the argument in the proof of Proposition \ref{prop0931} for $v$ above.
We refer the reader to \cite[Section 4]{MR2064654} for discussions on adjoint solutions to oblique derivative problems.
\end{remark}

The following Lemmas \ref{lem1001}, \ref{lem2} are needed to obtain Corollary \ref{cor4}, a kind of lower bound of Green's function associated with $B_{R}$. 
We refer \cite[Corollary 2.12]{MR4252191} for a probabilistic counterpart of Corollary \ref{cor4}.

\begin{lemma}\label{lem1001}
    There exist positive constants $S_0=S_0(d,\delta)$ and $\nu=\nu(d,\delta)$ such that if $\| b \| \le S_0$ and $u\in W_d^2(B_{1})$ satisfies $\cL u=-1$ in $B_1$ and $u \ge 0$ on $\partial B_1$, then $u(x)\ge \nu$ in $B_{1/2}$.
\end{lemma}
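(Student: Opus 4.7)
The plan is to compare $u$ with an explicit quadratic barrier for the drift-free operator, and to treat the drift term $b^i D_i$ as a perturbation controlled by the ABP estimate (Theorem~\ref{thm_abp}); the smallness hypothesis $\|b\| \le S_0$ is what will absorb that perturbation.

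Concretely, I will take
\[
\tilde w(x) := \tfrac{\delta}{2d}\bigl(1-|x|^2\bigr),
\]
so $\tilde w \equiv 0$ on $\partial B_1$, $\tilde w \ge \tfrac{3\delta}{8d}$ on $B_{1/2}$, and $|D\tilde w| \le \delta/d$. A direct computation using $D_{ij}\tilde w = -\tfrac{\delta}{d}\delta_{ij}$ yields
\[
\cL \tilde w = -\tfrac{\delta}{d}\operatorname{tr}(a) - \tfrac{\delta}{d}\, b\cdot x,
\]
and hence for $v := \tilde w - u$, which satisfies $v \le 0$ on $\partial B_1$ (using $u \ge 0$ there),
\[
\cL v = 1 - \tfrac{\delta}{d}\operatorname{tr}(a) - \tfrac{\delta}{d}\, b\cdot x.
\]
Ellipticity forces $\operatorname{tr}(a) \le d/\delta$, i.e.\ $\tfrac{\delta}{d}\operatorname{tr}(a) \le 1$, so the negative part is controlled pointwise by the drift alone:
\[
(\cL v)_{-} \le \tfrac{\delta}{d}|b\cdot x| \le \tfrac{\delta}{d}|b(x)| \qquad \text{on } B_1.
\]

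Applying Theorem~\ref{thm_abp} to $v$ in $B_1$ (the embedding $W^2_d(B_1) \hookrightarrow C(\overline{B_1})$ for $d \ge 2$ provides the required continuity up to the boundary),
\[
\sup_{B_1}(\tilde w - u)_{+} \le 2N\,\|(\cL v)_{-}\|_{L_d(B_1)} \le \tfrac{2N\delta}{d}\|b\|_{L_d(B_1)} \le \tfrac{2N\delta}{d}S_0,
\]
where $N$ is the ABP constant. On $B_{1/2}$ this becomes
\[
u \ge \tilde w - \tfrac{2N\delta}{d}S_0 \ge \tfrac{3\delta}{8d} - \tfrac{2N\delta}{d}S_0,
\]
and a choice of $S_0$ small enough to make the second term at most $\tfrac{3\delta}{16d}$ yields the conclusion with $\nu := \tfrac{3\delta}{16d}$.

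The one delicate issue to handle is that the ABP constant $N = N(d,\delta,\|b\|)$ nominally depends on $\|b\|$, which looks circular when one wants $S_0$ to depend only on $(d,\delta)$. This is resolved by imposing the a priori restriction $S_0 \le 1$: once $\|b\| \le S_0 \le 1$, the constant $N$ collapses to a function of $(d,\delta)$ only, and then $S_0$ can be selected depending on $(d,\delta)$ alone to close the smallness estimate.
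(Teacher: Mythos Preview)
Your proof is correct and follows the same strategy as the paper: compare $u$ with a barrier function and use the ABP estimate (Theorem~\ref{thm_abp}) to absorb the drift contribution, then choose $S_0$ small enough---with the same $S_0 \le 1$ trick to freeze the ABP constant as a function of $(d,\delta)$ alone. The only difference is cosmetic: the paper uses a smooth cutoff $\zeta$ equal to $1$ on $B_{1/2}$ and vanishing outside $B_1$ (scaled by a constant $N_0$) in place of your explicit quadratic $\tilde w$, but the argument structure is otherwise identical.
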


\begin{proof}
Let $S_0 \le 1$ be a positive number to be specified later.
Take a nonnegative smooth $\zeta$ such that $\zeta(x) = 1$ for $|x| \le 1/2$ and $\zeta(x) = 0$ for $|x| \ge 1$.
Then we have 
\begin{equation*}
    \cL \zeta = a^{ij}D_{ij}\zeta+b^iD_i\zeta \ge -N_0I_{\Gamma}-N_0|b|I_{\Gamma} \quad \textrm{in} \quad B_1,
\end{equation*}
where $\Gamma={B_{1}\setminus B_{1/2}}$ and $N_0$ is a positive constant depending on $d, \delta$.
Set $v=\zeta/N_0$.
Then, we see that 
\[
\cL v-\cL u\ge -|b|I_{\Gamma} \quad \textrm{in} \quad B_1,
\]
which implies that
\[
\left(\cL v -\cL u\right)_{-}\le |b|I_{\Gamma}  \quad \textrm{in} \quad B_1.
\]
Using Theorem \ref{thm_abp} with the fact that $u \in W_d^2(B_1) \subset C(\overline{B}_1)$ and $(v - u)_{+} = 0$ on $\partial B_1$, we see that
\[
\frac{1}{N_0} - u(x) = v(x)-u(x)\le N_{1}\left(\int_{\Gamma}|b|^d\right)^{1/d}, \quad \forall x \in B_{1/2},
\]
that is,
\[
\frac{1}{N_0} - N_{1} S_0 \le u(x), \quad \forall x \in B_{1/2},
\]
where $N_1$ is a positive constant depending only on $d, \delta$.
Hence, it suffices to take $S_0 := 1 \wedge (2N_0 N_1)^{-1}$.
The lemma is proved.
\end{proof}

\begin{lemma}\label{lem2}
    There exists  positive constants $R_0=R_0(d,\delta,\|b\|) > 2$ and $\nu=\nu(d,\delta)$ such that if $u\in W_d^2(B_{R_0})$ satisfies $\cL u=-1$ in $B_{R_0}$ and $u \ge 0$ on $\partial B_{R_0}$, then $u\ge \nu$ in $B_1$.
\end{lemma}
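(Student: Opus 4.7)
The approach is to refine the barrier argument of Lemma \ref{lem1001}, observing that the cutoff construction there actually only requires $\|b\|_{L_d}$ to be small on the annular support of $D\zeta$, not on the whole ball. The task is therefore to produce a dyadic annulus inside $B_{R_0}$ on which $\|b\|_{L_d}$ is small; a pigeonhole argument supplies this once the number of available annuli (equivalently, $R_0$) is large enough depending on $\|b\|$.

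For $m \in \mathbb{N}$ to be chosen later, set $R_0 = 2^{m+1}$ and introduce the disjoint dyadic annuli $\Gamma_k = B_{2^{k+1}} \setminus B_{2^k}$, $k = 0, 1, \ldots, m$, all contained in $B_{R_0}$. Since $\sum_{k=0}^{m} \|b\|_{L_d(\Gamma_k)}^d \le \|b\|_{L_d(B_{R_0})}^d \le \|b\|^d$, pigeonhole produces $k^* \in \{0, \ldots, m\}$ with $\|b\|_{L_d(\Gamma_{k^*})} \le \|b\|/(m+1)^{1/d}$. Set $R = 2^{k^*} \ge 1$, so that $\Gamma_{k^*} = B_{2R} \setminus B_R$.

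Mimicking the proof of Lemma \ref{lem1001}, take a cutoff $\zeta \in C_c^\infty(B_{2R})$ with $\zeta \equiv 1$ on $B_R$ and $|D\zeta| \le N_2/R$, $|D^2\zeta| \le N_2/R^2$, so on $\Gamma_{k^*}$,
\[
\cL\zeta \ge -N_2/R^2 - N_2 |b|/R, \qquad N_2 = N_2(d, \delta).
\]
Rescale the barrier as $v = (R^2/N_2)\zeta$, which yields $(\cL v)_- \le I_{\Gamma_{k^*}} + R|b|\, I_{\Gamma_{k^*}}$ on $B_{2R}$. By the weak maximum principle applied to $\cL u = -1 < 0$ with $u \ge 0$ on $\partial B_{R_0}$, we have $u \ge 0$ throughout $B_{R_0}$; in particular $(v - u)_+ = 0$ on $\partial B_{2R}$. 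Moreover, in $B_{2R}$, $\cL(v - u) = \cL v + 1 \ge -R|b|\, I_{\Gamma_{k^*}}$. Applying Theorem \ref{thm_abp} to $v - u$ on $B_{2R}$ (of diameter $4R$) then yields
\[
\sup_{B_{2R}} (v - u) \le N R^2 \|b\|_{L_d(\Gamma_{k^*})},
\]
where $N = N(d, \delta, \|b\|)$. Now choose $m$, and hence $R_0$, depending on $d, \delta, \|b\|$, so that $N\|b\|/(m+1)^{1/d} \le 1/(2 N_2)$. On $B_R$, where $v \equiv R^2/N_2$, this gives $u \ge R^2/(2 N_2) \ge 1/(2 N_2)$ since $R \ge 1$. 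Taking $\nu := 1/(2N_2) = \nu(d, \delta)$ and noting $B_1 \subset B_R$ finishes the proof.

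The main obstacle is clean bookkeeping of dependencies: the ABP constant in Theorem \ref{thm_abp} depends on $\|b\|$, which in turn determines $m$ (and hence $R_0$), but one has to verify that the final lower bound $\nu$ does not inherit any $\|b\|$-dependence. This is the case because $\nu$ comes solely from the cutoff (geometric) and the ellipticity bounds, while all $\|b\|$-dependent constants are absorbed into $R_0$ through the pigeonhole choice of $m$.
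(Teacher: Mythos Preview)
Your proof is correct and follows the same core strategy as the paper: a pigeonhole argument on disjoint annuli locates one where $\|b\|_{L_d}$ is small, and then the barrier from Lemma~\ref{lem1001} combined with Theorem~\ref{thm_abp} produces the lower bound, with $\nu$ depending only on $d,\delta$ through the cutoff/ellipticity constants.

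The packaging differs slightly. The paper uses linearly spaced annuli $B_{2i+2}\setminus B_{2i}$, applies Lemma~\ref{lem1001} \emph{as a black box} on unit balls $B_1(y)\subset\Gamma^{(i_0)}$ centered on a sphere, obtaining $u\ge\nu$ there, and then invokes Theorem~\ref{thm_abp} once more (for $\nu-u$, whose right-hand side vanishes) to push the bound inward to the origin. You instead use dyadic annuli and rescale the barrier $\zeta$ directly to $B_{2R}$, so a single application of Theorem~\ref{thm_abp} to $v-u$ already yields $u\ge R^2/(2N_2)\ge 1/(2N_2)$ on all of $B_R\supset B_1$. Your variant is a bit more streamlined (one ABP application instead of two), at the cosmetic cost of $R_0=2^{m+1}$ being exponential rather than linear in $m$; since only the dependence $R_0=R_0(d,\delta,\|b\|)$ matters, this is immaterial.
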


\begin{proof}
Take $m =1 +  [(\|b\|/S_0)^d]$ (that is, $m$ is the least positive integer greater than $(\|b\|/S_0)^d$), where $S_0$ is taken from Lemma \ref{lem1001}. Let $R_0 = R_0(d, \delta, \|b\|) :=2m + 2$ and for $i = 1, \cdots, m$, set $\Gamma^{(i)} := B_{2i+2} \setminus B_{2i}$.
Since
$$
\min_{1\le i\le m} \left(\int_{\Gamma^{(i)}}|b|^d\,dx\right)^{1/d}
\le\left(\frac{1}{m}\sum_{i=1}^{m}\int_{\Gamma^{(i)}}|b|^d\,dx\right)^{1/d}
\le m^{-1/d}\|b\| < S_0,
$$
there is at least one $i_0 \in \{1, 2, \cdots, m \}$ such that $\|b\|_{L_d(\Gamma^{(i_0)})} \le S_0$.

By applying Theorem \ref{thm_abp} to $-u  \in W_d^2(B_{R_0}) \subset C(\overline{B_{R_0}}) $, we have $u \ge 0$ in $B_{R_0}$.
Also note that $B_1(y) \subset \Gamma^{(i_0)}$ and $\|b\|_{L_d(B_1(y))} \le S_0$ for any $y$ satisfying $|y| = 2i_0 + 1$.
Then it follows from Lemma \ref{lem1001} that there is a positive number $\nu = \nu(d, \delta)$ such that $u \ge \nu$ in $B_{1/2}(y)$ for any $y \in \partial B_{2i_0 + 1}$.
Finally, by applying Theorem \ref{thm_abp} to $\nu - u$, we obtain $u \ge \nu$ in $B_{2i_0 + 1}$, which implies the desired result.
\end{proof}

By using a scaling $u_R(x)= (R_0/R)^2u(R/R_0 x)$, we derive the following from Lemma \ref{lem2}.

\begin{corollary}\label{cor1003}
There exists a positive constant $\nu=\nu(d,\delta,\|b\|)$ such that for any $R\in (0,\infty)$ if $u\in W_d^2(B_{R})$ satisfies $\cL u=-1$ in $B_R$ and $u \ge 0$ on $\partial B_{R}$, then $u(0)\ge \nu R^2$.
\end{corollary}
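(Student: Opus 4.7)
The plan is to reduce the corollary to Lemma~\ref{lem2} by carrying out the rescaling suggested just before the statement. Given $u$ satisfying $\cL u=-1$ in $B_R$ with $u\ge 0$ on $\partial B_R$, I would define
\[
u_R(x) := \left(\frac{R_0}{R}\right)^{\!2} u\!\left(\frac{R}{R_0}\, x\right), \qquad x \in B_{R_0}.
\]
A direct chain-rule computation shows that $u_R$ satisfies $\widetilde{\cL}\, u_R = -1$ in $B_{R_0}$ with $u_R \ge 0$ on $\partial B_{R_0}$, where $\widetilde{\cL} = \widetilde{a}^{ij} D_{ij} + \widetilde{b}^{\,i} D_i$ has coefficients
\[
\widetilde{a}^{ij}(x) = a^{ij}\!\left(\tfrac{R}{R_0}\, x\right), \qquad \widetilde{b}^{\,i}(x) = \tfrac{R}{R_0}\, b^i\!\left(\tfrac{R}{R_0}\, x\right).
\]
The tensor $\widetilde{a}^{ij}$ manifestly obeys the same ellipticity bound with constant $\delta$.

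The essential structural point is the scale invariance of the $L_d$ norm of the drift, which the introduction already highlights: by the change of variables $y = Rx/R_0$,
\[
\|\widetilde{b}\|_{L_d(B_{R_0})}^d = \int_{B_{R_0}} \left(\tfrac{R}{R_0}\right)^{\!d} \bigl|b\!\left(\tfrac{R}{R_0}\, x\right)\bigr|^d dx = \int_{B_R} |b(y)|^d\, dy \le \|b\|^d.
\]
Thus $\widetilde{\cL}$ satisfies the hypotheses of Lemma~\ref{lem2} with the same triple $(d,\delta,\|b\|)$, and so Lemma~\ref{lem2} yields $u_R(0) \ge \nu_0$ with $\nu_0 = \nu_0(d,\delta)$.

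Undoing the scaling gives $u_R(0) = (R_0/R)^2\, u(0)$, hence
\[
u(0) \ge \nu_0\,(R/R_0)^2 = \nu R^2,
\]
where $\nu := \nu_0/R_0^2$ depends only on $d,\delta,\|b\|$ since $R_0 = R_0(d,\delta,\|b\|)$. There is really no obstacle beyond the bookkeeping of the scaling algebra; the whole corollary is powered by the $L_d$-invariance of the drift under $b \mapsto r\, b(r\,\cdot)$, which is exactly why a single fixed radius $R_0$ controls the estimate at every scale $R$.
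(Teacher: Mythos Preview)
Your proof is correct and is precisely the scaling argument the paper intends: the paper's entire proof is the one-line remark ``By using a scaling $u_R(x)=(R_0/R)^2 u(R x/R_0)$, we derive the following from Lemma~\ref{lem2},'' and you have simply written out the bookkeeping, including the key observation that $\|\widetilde b\|_{L_d(B_{R_0})}=\|b\|_{L_d(B_R)}\le\|b\|$ so that Lemma~\ref{lem2} applies to $\widetilde{\cL}$ with the same parameters.
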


The following lemma refers to the intrinsic properties of Green's functions.

\begin{lemma}\label{lem3}
Let $\Omega$ and $\Omega'$ be bounded domains such that $\overline{\Omega'}\subset \Omega$.
Suppose that the operator $\cL$ admits Green's functions $G_{\Omega}$ and $G_{\Omega'}$ associated with $\Omega$ and $\Omega'$, respectively.
Then, 
\begin{enumerate}
\item for $x\in \Omega'$, $G_\Omega(x,\cdot)-G_{\Omega'}(x,\cdot)$ is a nonnegative solution of $\cL^* v=0$ in $\Omega'$.

\item for $x\in \Omega\setminus \Omega'$, $G_\Omega(x,\cdot)$ is also a nonnegative solution of $\cL^* v=0$ in $\Omega'$. 
\end{enumerate}
\end{lemma}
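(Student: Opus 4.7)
The plan is to derive both statements by testing the defining relation \eqref{sol_green}--\eqref{defi_green} of the Green's functions against cleverly chosen data, using Theorem \ref{thm_abp} both for nonnegativity and for uniqueness of the Dirichlet problem. By Remark \ref{rmk_Green_2}, we may assume $a^{ij}$, $b^i$, $\partial\Omega$, and $\partial\Omega'$ are regular enough that the Green's function representation produces genuine $W_d^2\cap C(\overline{\cdot})$ solutions; the general case follows by approximation.

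For the nonnegativity statements, fix a nonnegative $\phi\in C_c^{\infty}(\Omega')$ and let $u_\Omega$, $u_{\Omega'}$ denote the zero-Dirichlet solutions of $-\cL u=\phi$ on $\Omega$ and $\Omega'$, respectively. Applying Theorem \ref{thm_abp} to $-u_\Omega$ and $-u_{\Omega'}$ gives $u_\Omega,u_{\Omega'}\ge 0$ in their domains. Since $w:=u_\Omega-u_{\Omega'}$ satisfies $-\cL w=0$ in $\Omega'$ with $w=u_\Omega\ge 0$ on $\partial\Omega'$, a further application of Theorem \ref{thm_abp} to $-w$ yields $u_\Omega\ge u_{\Omega'}$ in $\Omega'$. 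Writing $u_\Omega(x)=\int_{\Omega'}G_\Omega(x,y)\phi(y)\,dy$ and $u_{\Omega'}(x)=\int_{\Omega'}G_{\Omega'}(x,y)\phi(y)\,dy$ and letting $\phi$ range over nonnegative test functions delivers $G_\Omega(x,\cdot)-G_{\Omega'}(x,\cdot)\ge 0$ on $\Omega'$ for $x\in\Omega'$ and $G_\Omega(x,\cdot)\ge 0$ on $\Omega'$ for $x\in\Omega\setminus\Omega'$.

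For the adjoint identity, pick any $u\in C_c^{\infty}(\Omega')$ and set $f:=-\cL u$. Since $u$ has compact support in $\Omega'\subset\subset\Omega$, extending by zero produces a function in $C_c^{\infty}(\Omega)\subset W_d^2(\Omega)\cap C(\overline{\Omega})$ satisfying $-\cL u=f$ on all of $\Omega$, with $f\in L_d(\Omega)$ supported in $\Omega'$. Uniqueness of the Dirichlet problem, supplied by Theorem \ref{thm_abp}, then forces
\begin{equation*}
u(x)=\int_{\Omega'}G_\Omega(x,y)\,f(y)\,dy\ \text{for } x\in\Omega,\qquad u(x)=\int_{\Omega'}G_{\Omega'}(x,y)\,f(y)\,dy\ \text{for } x\in\Omega'.
\end{equation*}
Subtracting, substituting $f=-\cL u$, and using $u(x)=0$ when $x\in\Omega\setminus\Omega'$ produces exactly the identity \eqref{eq240815_01} for $G_\Omega(x,\cdot)-G_{\Omega'}(x,\cdot)$ in case (1) and for $G_\Omega(x,\cdot)$ in case (2). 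The sign restriction on test functions in \eqref{eq240815_01} is no loss: writing an arbitrary $u\in C_c^{\infty}(\Omega')$ as $(u+C\psi)-C\psi$ for a fixed nonnegative bump $\psi\in C_c^{\infty}(\Omega')$ with $\psi\ge 1$ on $\operatorname{supp} u$ and $C$ large reduces to the nonnegative case by linearity.

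The main obstacle is only bookkeeping: verifying that the zero-extension of a $C_c^{\infty}(\Omega')$ function is admissible in the $\Omega$-Green's function representation (which it is, since the extension lies in $C_c^{\infty}(\Omega)$) and that uniqueness of the $L_d$-Dirichlet problem with $L_d$-drift is supplied by Theorem \ref{thm_abp} (rather than by a smoother theory). Beyond this, the lemma is a direct unraveling of the definition of Green's functions.
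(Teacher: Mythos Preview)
Your argument is correct and follows essentially the same route as the paper: comparison via Theorem \ref{thm_abp} for the nonnegativity, and uniqueness of the Dirichlet problem (again via Theorem \ref{thm_abp}) applied to $\cL\varphi$ as right-hand side for the adjoint identity, with case~(2) falling out from $u\equiv 0$ outside $\Omega'$. The only cosmetic differences are that the paper tests nonnegativity against arbitrary nonnegative $f\in L_d(\Omega')$ rather than $C_c^\infty$ data, and it works directly with nonnegative $\varphi$ (matching the definition \eqref{eq240815_01}) so your closing remark on splitting a general test function is unnecessary.
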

\begin{proof}
\textit{(1)} First, we show that for any $x\in \Omega'$, $G_\Omega(x,y)-G_{\Omega'}(x,y)\ge 0$ for almost every $y\in \Omega'$. 
Let $f$ be a nonnegative  function in $C_c^{\infty}(\Omega')$. 
Set 
\begin{equation*}
    u(x)=\int_{\Omega} G_{\Omega}(x,y)I_{\Omega'}f(y)\,dy,\quad
    w(x)=\int_{\Omega'} G_{\Omega'}(x,y)I_{\Omega'}f(y)\,dy.    
\end{equation*}
Under the assumption that $\mathcal{L}$ admits Green's functions $G_{\Omega}$ and $G_{\Omega'}$, and by the definition of the Green's function together with classical $L_p$-theory (refer to Remark~\ref{rmk_Green}), we see that $u$ and $w$ are $W_d^2(\Omega)\cap C(\overline{\Omega})$ solutions to
\begin{equation*}
    \cL u=-fI_{\Omega'} \quad \text{in} \quad \Omega, \quad u=0 \quad \text{on} \quad \partial \Omega
\end{equation*}
and 
\begin{equation*}
    \cL w=-f\quad \text{in} \quad \Omega', \quad w=0 \quad \text{on} \quad \partial \Omega',
\end{equation*}
respectively.
By applying Theorem \ref{thm_abp} to $w-u$, we obtain that  for $x \in \Omega'$,
\[
    w(x) - u(x) \le \sup_{\partial \Omega'}(w-u)_{+}+N\operatorname{diam}(\Omega')\| \left( \cL \left( w-u \right) \right)_{-}\|_{L_d(\Omega')}\]
    \[=\sup_{\partial \Omega'}(w-u)_{+} = 0.
\]
Therefore, we see that for any nonnegative $f \in C_c^{\infty}(\Omega')$,
\begin{equation*}
    \int_{\Omega'} G_{\Omega'}(x,y)f(y)\,dy\le
    \int_{\Omega'} G_{\Omega}(x,y)f(y)\,dy.
\end{equation*}
This implies that $G_\Omega(x, \cdot)-G_{\Omega'}(x, \cdot)$ is nonnegative almost everywhere in $\Omega'$. 

To show that $G_\Omega(x,\cdot)-G_{\Omega'}(x,\cdot)$ is a solution of $\cL^* v=0$ in $\Omega'$ for $x\in \Omega'$, we verify
\begin{equation}
	\label{lem3_eq1}
\int_{\Omega'} \left(G_\Omega \left( x,y \right) -G_{\Omega'} \left( x,y \right)\right) \cL \varphi \left( y \right)\,dy=0
\end{equation}
for any nonnegative $\varphi\in C_c^\infty(\Omega')$.
Let 
\[
v_1(x)=\int_{\Omega'} G_{\Omega'}(x,y)\cL \varphi(y)\,dy.
\]
Then $v_1$ satisfies 
\[
\cL v_1=-\cL \varphi , \quad v_1=0 \quad \text{on} \quad \partial \Omega'.
\]
Since $\varphi$ is also zero on $\partial\Omega'$, $v_1=-\varphi$ in $\Omega'$ by Theorem \ref{thm_abp}.
Likewise, if we set $v_2$ as follows,
\[
v_2(x)=\int_{\Omega'}G_{\Omega}(x,y)\cL \varphi(y)\,dy=\int_{\Omega}G_{\Omega}(x,y)\cL \varphi(y)\,dy,
\]
then $v_2=-\varphi$ in $\Omega$.
This implies that $v_1=v_2$ in $\Omega'$, so that the equation \eqref{lem3_eq1} holds true.\\

\noindent \textit{(2)} Moreover, by the observation $v_2=0$ in $\Omega\setminus \Omega'$, we see that for any $x\in \Omega\setminus\Omega'$, $G_{\Omega}(x,\cdot)$ is also a solution of $\cL^* v=0$ in $\Omega'$.
\end{proof}

The following corollary can be obtained by combining Corollary \ref{cor1003} with Lemma \ref{lem3}.

\begin{corollary}\label{cor4}
    Let $R\in(0,\infty)$ and $\kappa\in(0,1)$.
Suppose that $a^{ij}$ are uniformly continuous and $b^i$ are bounded, ensuring that the operator $\cL$ admits a Green's function $G_{R}$ associated with $B_R$.
Then for $x\in B_{\kappa R}$, we have
    \begin{equation*}
        \dashint_{B_R}G_R(x,y)\,dy\ge \nu R^{2-d},
    \end{equation*}
 where $\nu$ depends only on $d$, $\delta$, $\|b\|$, and $\kappa$.
\end{corollary}

\begin{proof}
    Let $x_0\in B_{\kappa R}$ and set $B=B_{(1-\kappa)R}(x_0)\subset B_R$. 
    Let $G_B(x,y)$ be the Green's function of $\cL$ associated with $B$.
    Then $v(x)=\int_{B}G_B(x,y)\,dy$ is the $W_d^2(B)\cap C(\overline{B})$ solution to the equation 
    \[
    \cL v=-1 \quad \text{in} \quad B,
    \]
    with the boundary condition $v=0$ on $\partial B$.
    By Corollary \ref{cor1003}, we have 
    \begin{equation}
        \label{eq_0813_01}
        \int_B G_B(x_0,y)\,dy=v(x_0)\ge \nu (1-\kappa)^2 R^2.
    \end{equation}
    Employing Lemma  \ref{lem3}-($1$) with \eqref{eq_0813_01}, we reach that 
    \[
    \int_{B_R}G_R(x_0,y)\,dy\ge \int_{B}G_R(x_0,y)\,dy\ge \int_{B}G_B(x_0,y)\,dy\ge \nu(1-\kappa)^2R^2.
    \]
    By dividing both sides of the above inequality by $|B_R|$, we finish the proof.
\end{proof}

We are now ready to prove a key estimate: a version of the reverse H\"older inequality for Green's functions.

\begin{proposition}
	\label{prop1002}
Let $\Omega$ be a bounded domain with $\partial \Omega \in C^2$.
Suppose that $a^{ij}$ are uniformly continuous and $b^i$ are bounded, ensuring that the operator $\cL$ admits a Green's function $G_{\Omega}$ associated with $\Omega$.
Then there is a positive constants $K (\ge 1)$ and $N$ which depend on $d$, $\delta$, and $\|b\|$ such that the following holds:\\

\noindent For any $x \in \Omega$, $R>0$ and $x_0\in \Omega$ with $B_{4KR}(x_0)\subset \Omega$,
we have
\begin{equation*}
\begin{split}
    \dashint_{B_R(x_0)} G_\Omega(x,y)^{d/(d-1)}\,dy
    &\le N\left(\dashint_{B_{3KR}(x_0)}G_\Omega(x,y)\,dy\right)^{(d-1)/d}
    \\&\quad+\frac{1}{2}\dashint_{B_{KR}(x_0)} G_\Omega(x,y)^{d/(d-1)}\,dy.        
\end{split}
\end{equation*}
    \end{proposition}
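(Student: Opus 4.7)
I would reduce Proposition \ref{prop1002} to Proposition \ref{prop0931} applied to the function $y \mapsto G_\Omega(x, y)$, which is a nonnegative solution of $\cL^* v = 0$ on $\Omega \setminus \{x\}$ but has a singularity at $y = x$. Letting $K$ denote the constant from Proposition \ref{prop0931}, I would split into two cases based on whether $x \in B_{2KR}(x_0)$ or not.

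\emph{Case 1: $x \notin B_{2KR}(x_0)$.} Here $G_\Omega(x, \cdot)$ is a nonnegative adjoint solution on the whole ball $B_{2KR}(x_0) \supset B_{KR}(x_0)$, so Proposition \ref{prop0931}, applied with its underlying domain replaced by $B_{2KR}(x_0)$, yields
\[
\Big(\dashint_{B_R(x_0)} G_\Omega^{d/(d-1)}\Big)^{(d-1)/d} \le N \dashint_{B_{KR}(x_0)} G_\Omega + \tfrac{1}{8}\Big(\dashint_{B_{KR}(x_0)} G_\Omega^{d/(d-1)}\Big)^{(d-1)/d}.
\]
Raising both sides to the power $d/(d-1)$ using $(a+b)^p \le 2^{p-1}(a^p + b^p)$ (with $p = d/(d-1) \in (1,2]$) turns the coefficient $1/8$ into at most $1/4 < 1/2$. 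After enlarging the integration domain in the first term from $B_{KR}(x_0)$ to $B_{3KR}(x_0)$ (losing only a factor $3^d$ since $G_\Omega \ge 0$), one obtains the target inequality.

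\emph{Case 2: $x \in B_{2KR}(x_0)$.} Set $B := B_{4KR}(x_0) \subset \Omega$. By Lemma \ref{lem3}(1), $\tilde G(y) := G_\Omega(x, y) - G_B(x, y)$ is a nonnegative adjoint solution on $B$ satisfying $0 \le \tilde G \le G_\Omega$. Decompose $G_\Omega = \tilde G + G_B$ and apply $(\tilde G + G_B)^{d/(d-1)} \le 2^{1/(d-1)}(\tilde G^{d/(d-1)} + G_B^{d/(d-1)})$. The $\tilde G$-piece is handled exactly as in Case 1 (now with underlying domain $B \supset B_{KR}(x_0)$), using $\tilde G \le G_\Omega$ to replace $\tilde G$ by $G_\Omega$ on the right. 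For the $G_B$-piece, Corollary \ref{cor1} yields $\int_B G_B(x, \cdot)^{d/(d-1)} \le N \operatorname{diam}(B)^{d/(d-1)} \le N' R^{d/(d-1)}$, so $\dashint_{B_R(x_0)} G_B^{d/(d-1)} \le N'' R^{d/(d-1) - d}$. Since $x \in B_{2KR}(x_0) = B_{(2/3)(3KR)}(x_0)$, Corollary \ref{cor4} applied to $B_{3KR}(x_0)$ (together with Lemma \ref{lem3}(1) to pass from $G_{B_{3KR}(x_0)}$ to $G_\Omega$) gives $\dashint_{B_{3KR}(x_0)} G_\Omega(x, \cdot) \ge \nu R^{2-d}$, whose $d/(d-1)$-th power scales precisely as $R^{d/(d-1) - d}$. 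Hence the $G_B$-contribution is absorbed into the first term on the RHS.

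\textbf{Main obstacle.} The essential difficulty is Case 2, where the Green's function singularity must be isolated and controlled. The key observation is that the upper bound on $\|G_B(x,\cdot)\|_{L^{d/(d-1)}(B)}$ coming from Corollary \ref{cor1} and the lower bound on $\dashint_{B_{3KR}(x_0)} G_\Omega(x,\cdot)$ coming from Corollary \ref{cor4} exhibit matching $R$-scaling, which is precisely what makes the absorption into the first RHS term possible with constants depending only on $d$, $\delta$, and $\|b\|$.
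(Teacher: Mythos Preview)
Your proposal is correct and follows essentially the same approach as the paper: the same case split at $x \in B_{2KR}(x_0)$, Lemma \ref{lem3}(2) plus Proposition \ref{prop0931} directly in Case 1, and in Case 2 subtracting off a local Green's function so that Lemma \ref{lem3}(1) and Proposition \ref{prop0931} handle the difference while Corollaries \ref{cor1} and \ref{cor4} control the local piece via the matching $R$-scaling you identify. The only cosmetic difference is that the paper peels off $G_{B_{3KR}(x_0)}$ rather than $G_{B_{4KR}(x_0)}$, and packages the Corollary \ref{cor1}/\ref{cor4} step as a single chain $\dashint_{B_R} G_{3KR}^{d/(d-1)} \le N(\dashint_{B_{3KR}} G_{3KR})^{d/(d-1)} \le N(\dashint_{B_{3KR}} G_\Omega)^{d/(d-1)}$; this is exactly your scaling argument.
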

    
    \begin{proof}
We may assume that $x_0=0$.
Let $K \ge 1$ be the one in Proposition \ref{prop0931}.
If $x\notin B_{2KR}$, then the conclusion holds from Lemma \ref{lem3}-($2$) and Proposition \ref{prop0931} (and \ref{cor1} for the assumption of $v$ in Proposition \ref{prop0931}). 
Let $x\in B_{2KR}$ and $G_{3KR}$ be the Green's function of $\cL$ associated with $B_{3KR}$, whose existence is ensured by the assumptions on the coefficients.
By Lemma \ref{lem3}-($1$), we have $G_\Omega(x,\cdot)\ge G_{3KR}(x,\cdot)$ in $B_{3KR}$, which implies 
\begin{equation}
    \label{thm3_eq3}
\begin{split}
\dashint_{B_R}G_\Omega(x,y)^{d/(d-1)}\,dy
&\le 2\dashint_{B_R}\left(G_\Omega(x,y)-G_{3KR}(x,y)\right)^{d/(d-1)}
\,dy 
\\&\quad + 2\dashint_{B_R}G_{3KR}(x,y)^{d/(d-1)}\,dy.
\end{split}
\end{equation}
Note that from Lemma \ref{lem3}-($1$), $G_{\Omega}(x,\cdot)-G_{3KR}(x,\cdot)$ is also a solution of $\cL^* v=0$ in $B_{3KR}$.
Then, by applying Proposition \ref{prop0931} to $G_{\Omega}(x,\cdot)-G_{3KR}(x,\cdot)$, we see that
\[
\dashint_{B_R}\left(G_\Omega(x,y)-G_{3KR}(x,y)\right)^{d/(d-1)}\,dy
\]
\[
\le N\left(\dashint_{B_{KR}}G_\Omega(x,y)-G_{3KR}(x,y)\,dy\right)^{d/(d-1)}
\]
\[+\frac{1}{8}\dashint_{B_{KR}}\left(G_\Omega(x,y)-G_{3KR}(x,y)\right)^{d/(d-1)}\,dy
\]
\begin{equation}\label{thm3_eq1}
\le N\left(\dashint_{B_{KR}}G_\Omega(x,y)\,dy\right)^{d/(d-1)}
+\frac{1}{8}\dashint_{B_{KR}}  G_\Omega(x,y)^{d/(d-1)}\,dy,    
\end{equation}
where the last inequality is due to the nonnegativity of Green's functions.
On the other hand, Corollary \ref{cor1} and Corollary \ref{cor4} imply that
\begin{equation}\label{thm3_eq2}
    \begin{split}
    \dashint_{B_R}G_{3KR}(x,y)^{d/(d-1)}\,dy
    &\le
    N\left(\dashint_{B_{3KR}}G_{3KR}(x,y)\,dy\right)^{d/(d-1)}
\\
&\le
N\left(\dashint_{B_{3KR}}G_{\Omega}(x,y)\,dy\right)^{d/(d-1)},
    \end{split}
\end{equation}
where we use $G_\Omega(x,y)\ge G_{3KR}(x,y)$ to get the last inequality.
By combining \eqref{thm3_eq3}, \eqref{thm3_eq1}, and \eqref{thm3_eq2}, we finish the proof.
\end{proof}

\subsection{\textbf{Proofs of Theorem \ref{cor_main} and Theorem \ref{thm_main}}}
	\label{proof_main_0419}

We now establish the \textit{higher integrability} of Green's functions under regularity assumptions on $a^{ij}$, $b^i$, and the domain $\Omega$. 
This result, stated in Lemma~\ref{lem_0418_1}, forms the foundation for the proofs of our main theorems presented below.

\begin{lemma}[Theorem \ref{thm_main} with smooth data]
    \label{lem_0418_1}
Let $\Omega$ be a bounded domain with $C^2$ boundary.
Assume that $a^{ij}$ are uniformly continuous and $b^i$ are bounded.
Then there exist positive constants $N$ and $q>d/(d-1)$ depending only on $d$, $\delta$, and $\|b\|$ such that the following holds:\\

\noindent The Green's function $G_{\Omega}$ of $\mathcal{L}$ associated with $\Omega$ satisfies
\begin{equation}
	\label{eq_0418_1}
\sup_{x\in \Omega}\left(\int_{\Omega}G_{\Omega}(x,y)^{q}\,dy\right)^{1/q}\le N\operatorname{diam}(\Omega)^{2-d/p},
\end{equation}
where $p$ is the conjugate exponent of $q$, i.e., $1/p + 1/q = 1$.
\end{lemma}

\begin{proof}
By using a translation and scaling argument, we may assume that $\Omega\subset Q_{1/2}$.
We take a domain $\widetilde{\Omega}$ such that $\partial \widetilde{\Omega}\in C^2$ and
\[
\Omega \subset Q_{1/2}\subset Q_1 \subset \widetilde{\Omega} \subset Q_2.
\]
Let $x\in Q_1$.
We rewrite $b$ as $bI_\Omega$ in $\widetilde{\Omega}$.
By Proposition \ref{prop1002}, for any $R>0$ and $x_0 \in Q_1$ satisfying $B_{4KR}(x_0)\subset Q_1 \subset \widetilde{\Omega}$, we have 
\begin{equation*}
\begin{split}
    \dashint_{B_R(x_0)} G_{\widetilde{\Omega}}(x,y)^{d/(d-1)}\,dy
    &\le N\left(\dashint_{B_{3KR}(x_0)}G_{\widetilde{\Omega}}(x,y)\,dy\right)^{(d-1)/d}
    \\&\quad+\frac{1}{2}\dashint_{B_{KR}(x_0)} G_{\widetilde{\Omega}}(x,y)^{d/(d-1)}\,dy,        
\end{split}  
\end{equation*}
where $G_{\widetilde{\Omega}}$ is the Green's function of the operator $\cL$ associated with $\widetilde{\Omega}$, and $N$, $K$ are positive constants from Proposition \ref{prop1002} depending on $d$, $\delta$, and $\|b\|$.
Therefore, it follows that for any $ x_0\in Q_1$, we have  
\begin{multline*}
\left( \cM_{r_0d(x_0)}  G_{\widetilde{\Omega}}( x,\cdot )^{d/(d-1)} \right) \left( x_0 \right)\\
\le N \left[ \left( \cM  G_{\widetilde{\Omega}}(x,\cdot) \right) \left(x_0 \right) \right]^{d/(d-1)}
+\frac{1}{2} \left(  \cM G_{\widetilde{\Omega}}(x,\cdot)^{d/(d-1)} \right) \left( x_0 \right),
\end{multline*}
where $r_0=1/(4K)$ and $d(x_0) = \operatorname{dist}(x_0, \partial Q_1)$.
Using Proposition \ref{reverse}, we see that there exists $q>d/(d-1)$ such that 
\begin{equation*}
\begin{aligned}
    \left(\dashint_{Q_{1/2}}G_{\widetilde{\Omega}}(x,y)^q\,dy\right)^{1/q}
    &\le N\left(\dashint_{Q_1}G_{\widetilde{\Omega}}(x,y)^{d/(d-1)}\,dy\right)^{(d-1)/d} \\
    &\le N\left( \frac{1}{|Q_1|} \int_{\widetilde{\Omega}} G_{\widetilde{\Omega}}(x,y)^{d/(d-1)}\,dy\right)^{(d-1)/d} \\
    &\le N \left( \frac{1}{|Q_1|} \right)^{(d-1)/d} \cdot \operatorname{diam}(\widetilde{\Omega}),
\end{aligned}
    \end{equation*}   
where the last inequality is due to Corollary \ref{cor1}.
On the other hand, from Lemma \ref{lem3}-($1$), we have $G_\Omega(x,y)\le G_{\widetilde{\Omega}}(x,y)$ for all $x, y \in \Omega$.
Thus, we arrive at 
\begin{equation}
	\label{eq250731_1}
\begin{aligned}
\left(\int_{\Omega}G_\Omega(x,y)^q\,dy\right)^{1/q}
&\le \left(\int_{\Omega}G_{\widetilde{\Omega}}(x,y)^q\,dy\right)^{1/q}
\le \left(\int_{Q_{1/2}}G_{\widetilde{\Omega}}(x,y)^q\,dy\right)^{1/q}\\
&= N |Q_{1/2}|^{1/q} \left(\dashint_{Q_{1/2}}G_{\widetilde{\Omega}}(x,y)^q\,dy\right)^{1/q}   \\
&\le N |Q_{1/2}|^{1/q} \left( \frac{1}{|Q_1|} \right)^{(d-1)/d} \cdot \operatorname{diam}(\widetilde{\Omega}) \\ 
&\le N |Q_{1/2}|^{1/q} \left( \frac{1}{|Q_1|} \right)^{(d-1)/d} \cdot \operatorname{diam}(Q_2) \\
& = N(d, \delta, \|b\|)
\end{aligned}
    \end{equation}   
for any $x \in \Omega$, which proves the scaled version of the lemma.\\

\noindent For a general bounded domain $\Omega$, we make the following observation. Let $z_0\in \Omega$ and $R=2\operatorname{diam}(\Omega)$. We define the rescaled domain and coefficients as follows:
\begin{align*}
\Omega_{R,z_0}&=\{x\in\bR^d: z_0+Rx \in \Omega\}\\
a^{ij}_{R,z_0}(x)&=a^{ij}(z_0+Rx)\\
b^i_{R,z_0}(x)&=R\cdot b^i(z_0+Rx)
\end{align*}
The Green's function $G_\Omega(x,y)$ of the operator $\cL=a^{ij}D_{ij}+b^iD_i$ on $\Omega$ is related to the Green's function $G_{R,z_0}$ of the rescaled operator $\cL_{R,z_0}:=a^{ij}_{R,z_0}D_{ij}+b^i_{R,z_0}D_i$ on $\Omega_{R,z_0}$ by the formula:
\[
G_\Omega(x,y)=R^{2-d}\cdot G_{R,z_0}\left( \frac{x-z_0}{R} ,\frac{y-z_0}{R}  \right).
\]
Using this relation and a change of variables, for $x \in \Omega$, we have
\begin{align*}
\left(\int_\Omega G_{\Omega}(x,y)^q\,dy\right)^{1/q}
&=R^{2-d}\left(\int_\Omega G_{R,z_0}\left(\frac{x-z_0}{R} ,\frac{y-z_0}{R} \right)^q\,dy\right)^{1/q}\\
&=R^{2-d+d/q}\left(\int_{\Omega_{R,z_0}} G_{R,z_0}\left(  \frac{x-z_0}{R} ,y \right)^q\,dy\right)^{1/q}\\
&=R^{2-d+d/q}\left(\int_{\Omega_{R,z_0}} G_{R,z_0} \left(  \frac{x-z_0}{R} ,y \right)^q\,dy\right)^{1/q}\\
&\le NR^{2-d+d/q},
\end{align*}
where the last inequality follows from \eqref{eq250731_1}, since $(x-z_0)/R  \in \Omega_{R, z_0} \subset Q_{1/2}$ and $\|b_{R,z_0}\|_{L_d(\Omega_{R,z_0})}= \|b\|_{L_d(\Omega)}\le \|b\|.$
This finishes the proof.
\end{proof}

Now, we apply Lemma \ref{lem_0418_1} to derive the following Aleksandrov-type estimate
\[
\sup_{\Omega} u_{+} \le \sup_{\partial\Omega} u_+ + N  \operatorname{diam}(\Omega)^{2-d/p} \| \left(  \cL u \right)_{-} \|_{L_{p}(\Omega)}
\]
using a standard approximation argument that ensures the existence of Green's functions; see Remark \ref{rmk_Green}.

\begin{proof}[\textbf{Proof of Theorem \ref{cor_main}}]
We begin by fixing a number 
\[
q \in (d/(d-1), d/(d-2))
\]
(with the convention $d/(d-2) := \infty$ when $d = 2$), appeared in Lemma \ref{lem_0418_1}.
We set $d_0 = d_0(d, \delta, \|b\|) \in (d/2, d)$ be the conjugate exponent of $q$.
Note that it is sufficient to prove \eqref{eq1013} for the case $p = d_0$, that is,
\begin{equation}
	\label{d_0}
\sup_{\Omega} u_{+} \le \sup_{\partial\Omega} u_+ + N  \operatorname{diam}(\Omega)^{2-d/d_0} \| \left(  \cL u \right)_{-} \|_{L_{d_0}(\Omega)},
\end{equation}
as the case $p > d_0$ reduces to the case $p=d_0$ by H\"older's inequality.
To prove \eqref{d_0}, we proceed through a series of approximations, reducing the problem to a smooth setting.\\

\noindent \textbf{Step 1: Reduction to a smooth domain.}  
For given $\Omega$, let $\{\Omega_k\}$ be a sequence of bounded $C^2$ domains such that $\Omega_k \nearrow \Omega$ as $k \to \infty$.  
We observe that the conclusion of the theorem for each $\Omega_k$ implies the desired estimate on $\Omega$.
More precisely, if \eqref{d_0} holds for $\Omega_k$ in place of $\Omega$, we have the following:
For any $x \in \Omega$ and $u \in W_{d_0, \mathrm{loc}}^2(\Omega) \cap C(\overline{\Omega})$, there is $k$ such that $x \in \Omega_k$ and
\begin{align*}
u_+(x) &\le \sup_{\partial\Omega_{k}}u_+ + N \operatorname{diam}(\Omega_k)^{2 - d/d_0} \| \left( \cL u \right)_{-} \|_{L_{d_0}(\Omega_k)} \\
 &\le \sup_{\partial\Omega_{k}}u_+ + N(d, \delta, \|b\|) \operatorname{diam}(\Omega)^{2 - d/d_0} \| \left( \cL u \right)_{-} \|_{L_{d_0}(\Omega)},
\end{align*}
where the last inequality is due to $d_0 > d/2$ and $\Omega_k \nearrow \Omega$.
Then by letting $k \to \infty$ we obtain the desired inequality for general bounded $\Omega$.
This step allows us to assume $\Omega$ is sufficiently smooth (\textit{e.g.} $\partial\Omega \in C^{2}$) and assume $u \in W_{d_0}^2(\Omega) \cap C(\overline{\Omega})$, rather than merely being locally in the appropriate function space.\\

\noindent  \textbf{Step 2: Reduction to smooth $u$.}  
We now restrict our attention to the case where $\Omega$ has a $C^2$ boundary and $u \in W_{d_0}^2(\Omega) \cap C(\overline{\Omega})$.
In this setting, there exists a family of smooth functions $\{u^\varepsilon\} \subset C^\infty(\overline{\Omega})$  approximating $u$ in $W_{d_0}^2(\Omega)$, which is continuously embedded into $C(\overline{\Omega})$ since $d_0 > d/2$.
We observe that
\begin{align*}
&\| \cL u - \cL u^{\varepsilon} \|_{L_{d_0}(\Omega)}\\
 &\le N \left( \| D^2 u - D^2 u^{\varepsilon} \|_{L_{d_0}(\Omega)} + \left\| b \left( D u - D u^{\varepsilon}\right) \right\|_{L_{d_0}(\Omega)} \right) \\
& \le N\left( \| D^2 u - D^2 u^{\varepsilon} \|_{L_{d_0}(\Omega)} + \| b\|_{L_d(\Omega)} \left\| D u - D u^{\varepsilon} \right\|_{L_{\frac{dd_0}{d-d_0}}(\Omega)} \right) \\
&\le N \left\| u - u^{\varepsilon} \right\|_{W_{d_0}^2(\Omega)} \to 0 \quad \textrm{as} \,\, \varepsilon \to 0,
\end{align*}
where the last inequality is due to Sobolev embedding and $N$ is independent of $\varepsilon$.
Also, it is clear that
\[
\sup_{\partial \Omega} u^{\varepsilon}_+ \to \sup_{\partial \Omega} u_+ \quad \textrm{as} \,\, \varepsilon \to 0.
\]
Thus, if \eqref{d_0} holds for $u^{\varepsilon}$ for each $\varepsilon > 0$ (with smooth $\Omega$), by letting $\varepsilon \to 0$, the same conclusion holds for general $u \in W_{d_0}^2(\Omega) \cap C(\overline{\Omega})$.
This, together with Step~1, allows us to assume $u \in C^{\infty}(\overline{\Omega})$ and $\partial\Omega \in C^2$.\\

\noindent  \textbf{Step 3: Reduction to regular $a^{ij}$ and $b^i$.} 
Now we focus on the case where $\Omega$ has a $C^2$ boundary and $u \in C^{\infty}(\overline{\Omega})$.
We approximate the coefficients $a^{ij}$ and $b^i$ by
\[
a^{ij}_{\varepsilon} := (a^{ij})^{(\varepsilon)} \quad \textrm{and} \quad b^i_{\varepsilon} := (b^i  I_{\Omega})^{(\varepsilon)},
\]
where $f^{(\varepsilon)}$ means the standard mollification of a locally integrable function $f$.
From the basic properties of mollification, we see that $a^{ij}_{\varepsilon}$ are uniformly continuous and $b^i_{\varepsilon}$ are bounded.
We also note that $a^{ij}_{\varepsilon}$ satisfy the ellipticity condition with same $\delta$, and $\|b_{\varepsilon}\|_{L_d(\Omega)} \le \|b\|$, where $b_{\varepsilon} = (b_{\varepsilon}^1, \ldots, b_{\varepsilon}^d)$.
Moreover, since $d_0 < d$,
\[
a^{ij}_{\varepsilon} \to a^{ij}, \quad b^{i}_{\varepsilon} \to b^{i} \quad \textrm{in} \,\, L_{d_0}(\Omega)
\]
as $\varepsilon \to 0$, and then, having in mind that $u \in C^{\infty}(\overline{\Omega})$, it follows that
\[
\cL_{\varepsilon} u := a^{ij}_{\varepsilon}D_{ij}u + b^{i}_{\varepsilon}D_i u \to \cL u \quad \textrm{in} \,\, L_{d_0}(\Omega)
\]
as $\varepsilon \to 0$.
Thus, if \eqref{d_0} holds for $\cL_{\varepsilon}$ for each $\varepsilon > 0$ (with smooth $\Omega$ and $u$), by letting $\varepsilon \to 0$, the same conclusion holds for general $\cL$.
\\

\noindent  \textbf{Step 4: The smooth case.}  
Due to the above reductions, it suffices to prove \eqref{d_0} with further assumptions that $\partial\Omega \in C^2$, $u \in C^\infty(\overline{\Omega})$, $a^{ij}$ are uniformly continuous, and $b^i$ are bounded.
From Remark \ref{rmk_Green}, the Green's function $G_\Omega$ exists and 
\[
w(x) := \int_{\Omega} G_{\Omega}(x, y) \, (\cL u(y))_{-}\,dy, \quad \text{for $x\in \Omega$,}
\]
solves $\cL w = -(\cL u)_-$ in $\Omega$ and $w = 0$ on $\partial\Omega$.
It follows from \eqref{eq_0418_1} that
\[
\sup_{\Omega} w_{+} \le N(d, \delta, \|b\|) \operatorname{diam}(\Omega)^{2 - d/d_0}\| (\cL u)_{-}\|_{L_{d_0}(\Omega)}.
\]
On the other hand, by Theorem \ref{thm_abp}, we have
\[
\sup_{\Omega} v_{+} \le \sup_{\partial\Omega}  v_{+} = \sup_{\partial\Omega}  u_{+},
\]
where $v = u - w$.
Combining the above inequalities yields the desired estimate \eqref{d_0}.  
This completes the proof of the theorem.
\end{proof}

We now drop the regularity assumptions on \( a^{ij} \), \( b^i \), and \( \partial \Omega \) in the statement of Lemma \ref{lem_0418_1}, and assume only the existence of the Green's function. That is, we proceed to prove Theorem \ref{thm_main}.

\begin{proof}[\textbf{Proof of Theorem \ref{thm_main}}]
Let $d_0=d_0(d,\delta,\|b\|)\in (d/2,d)$ be the constant from Theorem \ref{cor_main}.
We use the duality argument.
Let $f$ be a nonnegative function in $C_c^\infty(\Omega)$.
Then, by the definition of the Green's function, we see that
\[
u(x) := \int_{\Omega} G_{\Omega}(x, y) \, f(y)\,dy, \quad \text{for $x\in \Omega$,}
\]
is in $W_{d,\mathrm{loc}}^2(\Omega)\cap C(\overline{\Omega})\subset W_{d_0,\mathrm{loc}}^2(\Omega)\cap C(\overline{\Omega})$ and solves $\cL u = -f$ in $\Omega$ and $u = 0$ on $\partial\Omega$.
Then, by applying the estimate \eqref{eq1013} to $u$, we obtain that 
\[
\sup_{x\in \Omega}\int_{\Omega} G_{\Omega}(x, y) \, f(y)\,dy\le N \operatorname{diam}(\Omega)^{2-d/d_0}\|f\|_{L_{d_0}(\Omega)}.
\]
Combining the above with the fact that $G_\Omega\ge0$, we reach to
\[
\sup_{x\in \Omega}\left(\int_{\Omega}G_{\Omega}(x,y)^{q}\,dy\right)^{1/q}\le N\operatorname{diam}(\Omega)^{2-d/d_0},
\]
where $q$ is the conjugate exponent of $d_0$. Thus, the desired result has been established.
\end{proof}

\section*{Acknowledgement}
P. Jung has been supported by the National Research Foundation of Korea (NRF) funded by the Korean government (MSIT) (2019R1A2C1084683 and RS-2022-NR069609).
K. Woo has been supported by the National Research Foundation of Korea (NRF) grant funded by the Korean government (MSIT) (No.2019R1A2C1084683 and RS-2022-NR070754).
The authors would like to express their sincere gratitude to the anonymous referee for their valuable comments and suggestions, which greatly helped improve the clarity of the paper.

\end{document}